\providecommand{\U}[1]{\protect\rule{.1in}{.1in}}
\newtheorem{theorem}{Theorem}
\newtheorem{corollary}[theorem]{Corollary}
\newtheorem{example}[theorem]{Example}
\newtheorem{lemma}[theorem]{Lemma}
\newtheorem{proposition}[theorem]{Proposition}
\newtheorem{remark}[theorem]{Remark}
\newenvironment{proof}[1][Proof]{\noindent\textbf{#1.} }{\ \rule{0.5em}{0.5em}}
\begin{document}

\title{Efficiency of the convex hull of the columns of certain triple perturbed
consistent matrices}
\author{Susana Furtado\thanks{Email: sbf@fep.up.pt. The work of this author was
supported by FCT- Funda\c{c}\~{a}o para a Ci\^{e}ncia e Tecnologia, under
project UIDB/04561/2020.} \thanks{Corresponding author.}\\CMAFcIO and Faculdade de Economia \\Universidade do Porto\\Rua Dr. Roberto Frias\\4200-464 Porto, Portugal
\and Charles R. Johnson \thanks{Email: crjohn@wm.edu. }\\Department of Mathematics\\College of William and Mary\\Williamsburg, VA 23187-8795}
\maketitle

\begin{abstract}
In decision making a weight vector is often obtained from a reciprocal matrix
$A$ that gives pairwise comparisons among $n$ alternatives. The weight vector
should be chosen from among efficient vectors for $A.$ Since the reciprocal
matrix is usually not consistent, there is no unique way of obtaining such a
vector. It is known that all weighted geometric means of the columns of $A$
are efficient for $A.$ In particular, any column and the standard geometric
mean of the columns are efficient, the latter being an often used weight
vector. Here we focus on the study of the efficiency of the vectors in the
(algebraic) convex hull of the columns of $A.$ This set contains the (right)
Perron eigenvector of $A,$ a classical proposal for the weight vector, and the
Perron eigenvector of $AA^{T}$ (the right singular vector of $A$), recently
proposed as an alternative. We consider reciprocal matrices $A$ obtained from
a consistent matrix $C$ by modifying at most three pairs of reciprocal entries
contained in a $4$-by-$4$ principal submatrix of $C$. For such matrices, we
give necessary and sufficient conditions for all vectors in the convex hull of
the columns to be efficient. In particular, this generalizes the known
sufficient conditions for the efficiency of the Perron vector. Numerical
examples comparing the performance of efficient convex combinations of the
columns and weighted geometric means of the columns are provided.

\end{abstract}

\bigskip

\textbf{Keywords}: convex hull of the columns of a reciprocal matrix, decision analysis,  efficient vector, Perron
eigenvector, singular vector, weight vector

\textbf{MSC2020}: 90B50, 91B06, 15A18, 15B48

\section{Introduction}

An $n$-by-$n$ entry-wise positive matrix $A=\left[  a_{ij}\right]  $ is called
a \emph{reciprocal matrix }or\emph{ a pairwise comparison matrix} if
$a_{ji}=\frac{1}{a_{ij}},$ for all $1\leq i,j\leq n.$ We denote by
$\mathcal{PC}_{n}$ the set of all such matrices. Matrix $A$ is said to be
\emph{consistent} if $a_{ij}a_{jk}=a_{ik}$ for all $i,j,k.$ This happens if
and only if there is a positive vector $w=\left[
\begin{array}
[c]{ccc}%
w_{1} & \ldots & w_{n}%
\end{array}
\right]  ^{T}$ such that $a_{ij}=\frac{w_{i}}{w_{j}}$ for all $i,j.$ Such a
vector is unique up to a factor of scale. Any matrix in $\mathcal{PC}_{2}$ is consistent.

In the Analytic Hierarchy process, a method introduced by Saaty
\cite{saaty1977, Saaty} and used in decision analysis, reciprocal matrices
arise as independent, pairwise ratio comparisons among $n$ alternatives. If
the reciprocal matrix $A$ is consistent, $A=\left[  \frac{w_{i}}{w_{j}%
}\right]  ,$ the vector $w$ cardinally ranks the alternatives. However, when
$n>2,$ consistency of the ratio comparisons is unlikely. Thus, a cardinal
ranking vector should be obtained from a reciprocal matrix \cite{anh, baj,
choo, dij, golany} and it is natural to choose it from among efficient ones.

A positive vector $w=\left[
\begin{array}
[c]{ccc}%
w_{1} & \ldots & w_{n}%
\end{array}
\right]  ^{T}$ is called \emph{efficient} for $A\in\mathcal{PC}_{n}$
\cite{blanq2006} if, for every other positive vector $v=\left[
\begin{array}
[c]{ccc}%
v_{1} & \ldots & v_{n}%
\end{array}
\right]  ^{T},$
\[
\left\vert a_{ij}-\frac{v_{i}}{v_{j}}\right\vert \leq\left\vert a_{ij}%
-\frac{w_{i}}{w_{j}}\right\vert \text{ for all }1\leq i,j\leq n
\]
implies that $v$ and $w$ are proportional, i.e. no other consistent matrix
approximating $A$ is clearly better than the one associated with $w$ (Pareto
optimality). We denote the set of all efficient vectors for $A\ $ by
$\mathcal{E}(A).$ The set $\mathcal{E}(A)$ is connected \cite{blanq2006,FJ2}
and, when the $0$ vector is added, it is closed \cite{FJ1}.

The efficient vectors for a consistent matrix, say $A=\left[  \frac{w_{i}%
}{w_{j}}\right]  ,$ are the positive multiples of $w,$ that is, of any of its
columns. When $A$ is not consistent, there are infinitely many
(non-proportional) efficient vectors for $A.$ It is known that a vector $w$ is
efficient for $A$ if and only if a certain directed graph (digraph) $G(A,w),$
constructed from $A$ and $w,$ is strongly connected \cite{blanq2006, FJ3}.
Many ways of constructing efficient vectors for $A$ have been proposed.
Recently, a method to generate inductively all efficient vectors for a
reciprocal matrix was provided \cite{FJ2}. Also, a description of the set of
efficient vectors of a reciprocal matrix $A\in\mathcal{PC}_{n}\ $as a union of
at most $\frac{(n-1)!}{2}$ convex sets was given in \cite{FJ6}.

The classical proposal for the weight vector obtained from a reciprocal matrix
$A\in\mathcal{PC}_{n}$ is its (right) Perron eigenvector \cite{saaty1977,
Saaty}. For $n>3$ examples of reciprocal matrices for which the Perron vector
is inefficient are known \cite{blanq2006, bozoki2014, FJ5}. Also, classes of
reciprocal matrices for which the Perron vector is efficient have been
identified, with the pioneering works \cite{p6, p2}. In the sequence of these
results, all the efficient vectors in some classes of reciprocal matrices have
been explicitly described. More precisely, in \cite{CFF} and \cite{Fu22} a
description of the efficient vectors for a reciprocal matrix obtained from a
consistent matrix by perturbing one pair or two pairs of reciprocal entries
has been presented. These matrices were called \emph{simple} and \emph{double
perturbed consistent matrices} in \cite{p6} and \cite{p2}, respectively, where
the efficiency of their Perron vector was shown. In \cite{FJ6} the efficient
vectors for a reciprocal matrix obtained from a consistent one by modifying
one column (and the corresponding reciprocal row) were described. In
\cite{FJ4} an $s$-block perturbed consistent matrix was defined as a
reciprocal matrix obtained from a consistent one by modifying an $s$-by-$s$
principal submatrix ($s\geq2$)$.$ When $s=2,$ this class coincides with the
class of simple perturbed consistent matrices; when $s=3$ it includes a type
of double perturbed consistent matrices. In that paper we gave an explicit way
of constructing the efficient vectors for a $3$-block perturbed consistent
matrix from the efficient vectors for a $4$-by-$4$ principal submatrix that
contains the modified $3$-by-$3$ principal block. The class of $4$-block
perturbed consistent matrices includes the $s$-block perturbed consistent
matrices, for $s=2,3,$ and all types of double perturbed consistent matrices.
Classes of efficient vectors for $s$-block perturbed consistent matrices with
$s\geq4$ were also given in \cite{FJ4}. Any matrix in $\mathcal{PC}_{3}$ is a
$2$-block perturbed consistent matrix and any matrix in $\mathcal{PC}_{4}$ is
a $3$-block perturbed consistent matrix.

In \cite{FJ3} we have shown the efficiency for $A\in\mathcal{PC}_{n}\ $of the
vectors in the geometric convex hull of the columns of $A$ (the set of the
Hadamard products of entry-wise powers of the columns with nonnegative
exponents summing to $1$). In particular, the (Hadamard) geometric mean of all
the columns of $A$ \cite{blanq2006, fichtner86}, or of any subset of the
columns \cite{FJ1}, is efficient. The former is often used as the weight vector.

A natural question is the study of the efficiency of the vectors in the
(algebraic) convex hull of the columns of $A\in\mathcal{PC}_{n}.$ Note that,
since any positive multiple of an efficient vector is still efficient, when
all vectors in the convex hull of the columns of $A$ are efficient then the
cone generated by the columns (not including the $0$ vector) is contained in
$\mathcal{E}(A).$ We denote it by $\mathcal{C}(A).$ When $\mathcal{E}(A)$ is
convex then $\mathcal{C}(A)\subseteq\mathcal{E}(A)$, as any column of $A$ is efficient.

Recently \cite{FJ7}, as an alternative to the Perron vector of $A,$ we have
proposed the (right) Perron vector of $AA^{T}$ (called the singular vector of
$A)$ as the weight vector obtained from the reciprocal matrix $A.$ Since both
vectors are in $\mathcal{C}(A),$ if $\mathcal{C}(A)\subseteq\mathcal{E}(A),$
then they are efficient for $A.$

From the known description of the efficient vectors for a $2$-block perturbed
consistent matrix \cite{CFF}, it follows that, for such a matrix $A$,
$\mathcal{E}(A)$ is convex and, thus, $\mathcal{C}(A)\subseteq\mathcal{E}(A).$
Moreover, if $A$ is a $3$-by-$3$ matrix, then $\mathcal{C}(A)=\mathcal{E}(A)$
(Proposition \ref{prop3}).

We call a reciprocal matrix obtained from a consistent matrix by modifying at
most $3$ pairs of reciprocal entries a triple perturbed consistent matrix.
Such matrices are $s$-block perturbed consistent matrices for some $3\leq
s\leq6,$ depending on the pattern of the perturbation.

In this paper we focus on triple perturbed consistent matrices with the
modified entries located in a $4$-by-$4$ principal submatrix. When all the
modified entries above (below) the diagonal lie in the same row (or column),
it is known that $\mathcal{C}(A)\subseteq\mathcal{E}(A)$ \cite{FJ7}.
Otherwise, we have two possible situations: the set of rows and the set of
columns in which the modified entries above the diagonal lie have a nonempty
intersection, in which case the matrix is a $3$-block perturbed consistent
matrix; the modified entries above the diagonal lie in a $2$-by-$2$ submatrix
indexed by disjoint sets of rows and columns. In the latter case, we say that
the matrix is a $4$-block triangular perturbed consistent matrix. In both
cases, sufficient conditions for the efficiency of the Perron vector are known
(see \cite{FJ4} for the case of a $3$-block perturbed consistent matrix and
\cite{FerFur,Ro} for a $4$-block triangular perturbed consistent matrix). Here
we show that, in fact, these conditions (including some boundary ones that are
new) are necessary and sufficient for the efficiency of all vectors in the
convex hull of the columns of such reciprocal matrices. Thus, when these
conditions hold, the efficiency of the Perron vector and the singular vector
is a corollary of our results and a much larger class of efficient vectors is provided.

\bigskip

The paper is organized as follows. In Section \ref{s22} we introduce the
technical background that we need, including some additional notation. In
Section \ref{s3} we identify a necessary condition for $\mathcal{C}(A)$ to be
contained in $\mathcal{E}(A)$ for a general reciprocal matrix $A.$ In Section
\ref{s4} we give necessary and sufficient conditions for $\mathcal{C}(A)$ to
be contained in $\mathcal{E}(A)$ when $A$ is a $3$-block perturbed consistent
matrix. This includes the case of a $4$-by-$4$ reciprocal matrix. In Section
\ref{s6} we solve the same problem when $A$ is a $4$-block triangular
perturbed consistent matrix. In Section \ref{s7}, for a $3$-block perturbed
consistent matrix $A$ for which $\mathcal{C}(A)$ is contained in
$\mathcal{E}(A),$ we compare numerically the performance of different convex
combinations of the columns of $A$ and weighted geometric means of the
columns. In particular, we consider the Perron vector, the singular vector and
the arithmetic and geometric means of all columns in our comparisons. Our
results strongly emphasize the importance of having a large class of efficient
vectors from which a good weight vector can be chosen, as the performance of
these vectors seems to depend on the particular reciprocal matrix. We conclude
the paper with some final remarks in Section \ref{s8}.

\section{Background\label{s22}}

In this section we introduce some additional notation and some known facts
that will be used in obtaining our main results.

\subsection{Notation\label{secnotation}}

We denote by $\mathcal{V}_{n}$ the set of positive column $n$-vectors. Given
$w\in\mathcal{V}_{n},$ we usually denote by $w_{i}$ the $i$th entry of $w.$

We denote by $J_{m,n}$ the $m$-by-$n$ matrix with all entries equal to $1$ and
by $\mathbf{e}_{n}$ the vector in $\mathcal{V}_{n}$ with all entries equal to
$1.$ By $I_{n}$ we denote the identity matrix of size $n.$ Given a vector $w,$
we denote by $\operatorname*{diag}(w)$ the diagonal matrix with main diagonal
$w.$ If $w$ is positive, we say that $\operatorname*{diag}(w)$ is a positive
diagonal matrix.

For an $n$-by-$n$ matrix $A=[a_{ij}],$ the principal submatrix of $A$
determined by deleting (by retaining) the rows and columns indexed by a subset
$K\subseteq\{1,\ldots,n\}$ is denoted by $A(K)$ $(A[K]);$ we abbreviate
$A(\{i\})$ as $A(i)$ and $A(\{i,j\})$ as $A(i,j)$, $i\neq j.$ Similarly, if
$w\in\mathcal{V}_{n},$ we denote by $w(K)$ ($w[K]$) the vector obtained from
$w$ by deleting (by retaining) the entries indexed by $K$ and abbreviate
$w(\{i\})$ as $w(i)$ and $w(\{i,j\})$ as $w(i,j)$. Note that, if $A$ is
reciprocal (consistent) then so are $A(K)$ and $A[K].$

\subsection{Basic results on efficient vectors}

An important characterization of efficient vectors for a reciprocal matrix in
terms of a certain directed graph is known.

Given $A=[a_{ij}]\in\mathcal{PC}_{n}$ and a vector
\begin{equation}
w=\left[
\begin{array}
[c]{ccc}%
w_{1} & \cdots & w_{n}%
\end{array}
\right]  ^{T}\in\mathcal{V}_{n}, \label{ww1}%
\end{equation}
define $G(A,w)$ as the directed graph with vertex set $\{1,\ldots,n\}$ and a
directed edge $i\rightarrow j$ if and only if $w_{i}\geq a_{ij}w_{j}$, $i\neq
j.$

The following result was noticed in \cite{blanq2006} using techniques from
optimization. In \cite{FJ3} a matricial proof of it was given.

\begin{theorem}
\label{blanq} Let $A\in\mathcal{PC}_{n}$ and $w\in\mathcal{V}_{n}$. The vector
$w$ is efficient for $A$ if and only if $G(A,w)$ is a strongly connected
digraph, that is, for all pairs of vertices $i,j,$ with $i\neq j,$ there is a
directed path from $i$ to $j$ in $G(A,w)$.
\end{theorem}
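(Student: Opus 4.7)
The plan is to prove both directions by analyzing the ratios $r_i := v_i/w_i$ and how they interact with the edges of $G(A,w)$. The key observation is that an edge $i\to j$ in $G(A,w)$ records the inequality $w_i \geq a_{ij}w_j$, that is, the approximation $w_i/w_j$ \emph{overshoots} (or meets) the target entry $a_{ij}$; its absence records $w_i/w_j < a_{ij}$ (undershoot). In either case, the sign of $a_{ij}-w_i/w_j$ is known, so the absolute value in the hypothesis can be resolved.

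For sufficiency, assume $G(A,w)$ is strongly connected and let $v\in\mathcal{V}_n$ satisfy $|a_{ij}-v_i/v_j|\leq|a_{ij}-w_i/w_j|$ for every $i,j$. Set $r_i := v_i/w_i$, $M := \max_i r_i$ and $S := \{i : r_i = M\}$. The aim is to show $S=\{1,\ldots,n\}$, which forces $v$ to be a positive scalar multiple of $w$. If $S$ were a proper nonempty subset, strong connectivity would yield an edge $i\to j$ with $i\in S$ and $j\notin S$: trace any directed path from a vertex of $S$ to a vertex of $T:=\{1,\ldots,n\}\setminus S$ and take the first edge that crosses. From that edge, $a_{ij}\leq w_i/w_j$, and since $r_i=M>r_j$,
\[
\frac{v_i}{v_j} \;=\; \frac{M}{r_j}\cdot\frac{w_i}{w_j} \;>\; \frac{w_i}{w_j} \;\geq\; a_{ij},
\]
giving $|v_i/v_j-a_{ij}|>|w_i/w_j-a_{ij}|$, a contradiction. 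Isolating this single crossing edge is the crux of the argument.

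For necessity, suppose $G(A,w)$ is not strongly connected, so the vertex set splits as a disjoint union $S\sqcup T$ with $S,T$ nonempty and no edge from $S$ to $T$; equivalently, $w_i/w_j<a_{ij}$ for every $(i,j)\in S\times T$. Define $v$ by $v_i=tw_i$ for $i\in S$ and $v_j=w_j$ for $j\in T$, with $t>1$ to be chosen. For $(i,j)\in S\times T$ we have $v_i/v_j=tw_i/w_j$, and choosing $t>1$ close enough to $1$ that the finitely many inequalities $tw_i/w_j\leq a_{ij}$ are preserved gives $|a_{ij}-v_i/v_j|<|a_{ij}-w_i/w_j|$. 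The reciprocal pairs $(j,i)\in T\times S$ improve symmetrically, while pairs lying entirely in $S$ or entirely in $T$ leave the ratio $v_i/v_j=w_i/w_j$ unchanged. Since $S$ and $T$ are both nonempty, $v$ is not proportional to $w$, so $w$ is inefficient.

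The main obstacle will be the edge-crossing step in the sufficiency direction: strong connectivity supplies only paths, not edges, between subsets, so the single violating edge must be distilled from any such path. After that, the remainder is careful sign-and-strict-inequality bookkeeping, streamlined by the non-strict convention ($\geq$ rather than $>$) in the definition of $G(A,w)$, which makes $G(A,w)$ a tournament and guarantees that non-edges correspond to strict inequalities, precisely what powers the approximation improvement in the necessity direction.
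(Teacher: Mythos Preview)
Your proof is correct. Note, however, that the paper does not actually prove Theorem~\ref{blanq}: it is quoted from the literature, with the remark that it was first obtained in \cite{blanq2006} ``using techniques from optimization'' and that a ``matricial proof'' appears in \cite{FJ3}. There is therefore no in-paper proof to compare against.

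Your argument is the natural direct one and is essentially the combinatorial proof one would expect: in the sufficiency direction you exploit strong connectivity to locate an edge crossing out of the set where $r_i=v_i/w_i$ is maximal, and in the necessity direction you scale the entries of $w$ on a set $S$ with no outgoing edges to produce a non-proportional $v$ that weakly dominates $w$ entrywise. Both steps are clean and the sign bookkeeping is right. One terminological quibble: $G(A,w)$ is not literally a tournament, since $w_i=a_{ij}w_j$ yields edges in both directions; what you actually use, and state correctly, is that the \emph{absence} of an edge $i\to j$ forces the strict inequality $w_i/w_j<a_{ij}$, which is precisely what drives the improvement in the necessity direction.
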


\bigskip

The set $\mathcal{PC}_{n}$ is closed under similarity by either a permutation
matrix or a positive diagonal matrix (and, thus, a monomial similarity). Such
transformations interface with efficient vectors nicely.

\begin{lemma}
\cite{Fu22}\label{lsim} Suppose that $A\in\mathcal{PC}_{n}$ and $w\in
\mathcal{E}(A).$ If $S$ is an $n$-by-$n$ monomial matrix, then $SAS^{-1}%
\in\mathcal{PC}_{n}$ and $Sw\in\mathcal{E}(SAS^{-1})$.
\end{lemma}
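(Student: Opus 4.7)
The plan is to invoke the graph-theoretic characterization of efficiency in Theorem \ref{blanq}, and simply verify that the digraph $G(SAS^{-1},Sw)$ is isomorphic, as a directed graph, to $G(A,w)$. Since strong connectivity is preserved under vertex relabelings, this will give the conclusion.

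First I would decompose the monomial matrix as $S=PD$, where $P$ is a permutation matrix (say corresponding to the permutation $\sigma$, so that $Pe_j=e_{\sigma(j)}$) and $D=\operatorname{diag}(d_1,\ldots,d_n)$ is a positive diagonal matrix. It is convenient to handle the two factors separately, since similarity by a monomial matrix is the composition of similarities by a permutation and by a positive diagonal matrix, and the statement is transitive: if it holds for $S_1$ and for $S_2$, it holds for $S_1S_2$.

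The reciprocal property of $SAS^{-1}$ is straightforward: a direct entrywise computation gives $(DAD^{-1})_{ij}=(d_i/d_j)a_{ij}$, so $(DAD^{-1})_{ij}(DAD^{-1})_{ji}=a_{ij}a_{ji}=1$; and conjugation by $P$ just permutes indices, $(PAP^{-1})_{ij}=a_{\sigma^{-1}(i),\sigma^{-1}(j)}$, which also preserves reciprocity. For the efficiency part, I would compare edges directly. In the diagonal case, $(Dw)_i\ge (DAD^{-1})_{ij}(Dw)_j$ simplifies, after cancelling the factor $d_i$ on each side, to $w_i\ge a_{ij}w_j$, so $G(DAD^{-1},Dw)=G(A,w)$ as labeled digraphs. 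In the permutation case, $(Pw)_i\ge (PAP^{-1})_{ij}(Pw)_j$ becomes $w_{\sigma^{-1}(i)}\ge a_{\sigma^{-1}(i),\sigma^{-1}(j)}w_{\sigma^{-1}(j)}$, so $G(PAP^{-1},Pw)$ is obtained from $G(A,w)$ by relabeling each vertex $k$ as $\sigma(k)$.

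Strong connectivity is invariant under both of these operations (identity and vertex relabeling), so $G(SAS^{-1},Sw)$ is strongly connected if and only if $G(A,w)$ is, and Theorem \ref{blanq} then yields $Sw\in\mathcal{E}(SAS^{-1})$. The only potential obstacle is index bookkeeping in the permutation case, particularly the choice of convention for associating a permutation with its matrix; once that convention is fixed, the computation is routine.
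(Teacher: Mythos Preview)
Your argument is correct. The decomposition $S=PD$ with $P$ a permutation matrix and $D$ a positive diagonal matrix is the natural way to handle a (positive) monomial similarity, and your entrywise computations are accurate: in the diagonal case the digraph $G(DAD^{-1},Dw)$ literally coincides with $G(A,w)$, while in the permutation case the two digraphs differ only by the vertex relabeling $k\mapsto\sigma(k)$. Invoking Theorem~\ref{blanq} then finishes the proof cleanly. The transitivity reduction is also fine.

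As for comparison with the paper: there is nothing to compare. The paper does not prove Lemma~\ref{lsim}; it simply quotes it from \cite{Fu22} as a known background result. Your proof via the graph characterization is exactly the expected argument and would be the standard route in the cited reference as well. One small remark: you implicitly use that the monomial matrix has \emph{positive} nonzero entries (so that $D$ is a positive diagonal matrix and $Sw\in\mathcal{V}_n$); this is consistent with the paper's usage, which explicitly describes monomial similarity as the composition of a permutation similarity and a positive diagonal similarity.
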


In \cite{FJ2} we gave an inductive construction of all the efficient vectors
for a reciprocal matrix $A\in\mathcal{PC}_{n}$. They can be constructed as
extensions of the efficient vectors for the $3$-by-$3$ principal submatrices
of $A$. This procedure is based on the "only if" claim in the next theorem.
The "if" statement was noticed in \cite{FJ5}. This theorem will play a central
role in obtaining our main results.

\begin{theorem}
\label{tnbyn}Let $A\in\mathcal{PC}_{n}$, $n\geq4,$ and $w\in\mathcal{V}_{n}.$
Then, $w\in\mathcal{E}(A)$ if and only if there are $i,j\in\{1,\ldots,n\},$
with $i\neq j,$ such that $w(i)\in\mathcal{E}(A(i))$ and $w(j)\in
\mathcal{E}(A(j)).$
\end{theorem}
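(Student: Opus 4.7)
I would base the argument on Theorem~\ref{blanq}, which translates the efficiency of $w$ for $A$ into the strong connectivity of $G(A,w)$, together with two structural observations: first, $G(A(k), w(k))$ is precisely the induced subdigraph of $G(A,w)$ on $V \setminus \{k\}$, where $V = \{1,\ldots,n\}$; second, $G(A,w)$ is semicomplete, since for any $p \neq q$ either $w_p \geq a_{pq} w_q$ or $w_q > a_{qp} w_p$, so at least one of the directed edges $p \to q$ or $q \to p$ is present.

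For the ``if'' direction, suppose $w(i) \in \mathcal{E}(A(i))$ and $w(j) \in \mathcal{E}(A(j))$ with $i \neq j$. By Theorem~\ref{blanq}, both $G(A(i), w(i))$ and $G(A(j), w(j))$ are strongly connected induced subdigraphs of $G(A,w)$. Since $n \geq 4$, pick any $k \in V \setminus \{i,j\}$. For arbitrary $p, q \in V$ I would build a directed path in $G(A,w)$ from $p$ to $q$: if both $p$ and $q$ lie in $V \setminus \{i\}$ or both lie in $V \setminus \{j\}$, use the corresponding strongly connected subdigraph; otherwise $\{p,q\} = \{i,j\}$, and (say $p = i$, $q = j$) I splice a path from $i$ to $k$ in $G(A(j), w(j))$ with a path from $k$ to $j$ in $G(A(i), w(i))$. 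Strong connectivity of $G(A,w)$ follows and, by Theorem~\ref{blanq}, $w \in \mathcal{E}(A)$.

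For the ``only if'' direction, suppose $w \in \mathcal{E}(A)$, so that $G(A,w)$ is a strongly connected semicomplete digraph on $n \geq 4$ vertices. I plan to invoke the vertex-pancyclicity of such digraphs (the extension of Moon's theorem): every vertex $v$ lies on a directed cycle of length $n-1$ in $G(A,w)$ (this needs $n - 1 \geq 3$). Such a cycle omits exactly one vertex $u_v \neq v$ and is a Hamilton cycle of the induced subdigraph $G(A,w) - u_v = G(A(u_v), w(u_v))$. Any digraph containing a Hamilton cycle is strongly connected, hence by Theorem~\ref{blanq} we have $w(u_v) \in \mathcal{E}(A(u_v))$. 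The assignment $v \mapsto u_v$ from $V$ to itself cannot be constant: if $u_v = u$ for all $v$, taking $v = u$ forces $u_u = u$, contradicting $u_u \neq u$. Its image therefore contains at least two distinct indices $i$ and $j$, which furnish the required pair.

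The main obstacle I anticipate is the appeal to vertex-pancyclicity for strongly connected semicomplete digraphs. In the generic case, when no equality $w_p/w_q = a_{pq}$ holds, $G(A,w)$ is a tournament and Moon's classical theorem applies directly. When such equalities occur, $G(A,w)$ may contain $2$-cycles and one needs Thomassen's extension of Moon to semicomplete digraphs; alternatively, one can perturb $w$ slightly to break the equalities, apply the tournament version of Moon to the nearby $G(A, w')$, and pass to the limit using the closedness of $\mathcal{E}(A) \cup \{0\}$ noted in the introduction.
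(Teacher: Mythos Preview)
The paper does not contain its own proof of Theorem~\ref{tnbyn}: it imports the ``only if'' direction from \cite{FJ2} and the ``if'' direction from \cite{FJ5} and then uses the result as a black box. So there is no in-paper argument to compare against, and I will simply assess your proposal.

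Your ``if'' argument is correct and is the natural one: with $n\geq 4$ there is a third vertex $k\notin\{i,j\}$ that lies in both $V\setminus\{i\}$ and $V\setminus\{j\}$, and splicing paths through $k$ handles the only case ($\{p,q\}=\{i,j\}$) not covered by one of the two strongly connected induced subdigraphs.

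Your main ``only if'' argument via vertex-pancyclicity is also correct. The extension of Moon's theorem to strongly connected semicomplete digraphs holds (it is, for instance, a special case of Bang-Jensen's vertex-pancyclicity for locally semicomplete digraphs), and once granted, the fixed-point-free map $v\mapsto u_v$ gives at least two deletable vertices exactly as you say. A slightly more self-contained variant, if you prefer not to cite that extension, is: a strongly connected semicomplete digraph has a Hamiltonian cycle $C$; keep the arcs of $C$ and break the remaining $2$-cycles arbitrarily to obtain a strongly connected spanning tournament, and apply Moon's theorem there.

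Your perturbation alternative, however, has a genuine gap. Perturbing $w$ to a generic $w'$ does produce a tournament $G(A,w')$, but nothing forces it to be strongly connected: at each tie $w_p/w_q=a_{pq}$ the perturbation direction, not you, decides which arc of the $2$-cycle survives, and an unlucky choice can destroy strong connectivity. You therefore cannot apply Moon's theorem to $G(A,w')$, nor can you conclude $w'\in\mathcal{E}(A)$ in order to feed a limiting argument through the closedness of $\mathcal{E}(A)\cup\{0\}$. Since your primary argument does not depend on this, I would drop the perturbation remark or replace it with the spanning-tournament reduction above.
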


We note that the "if" statement in the theorem also holds for $n=3.$

\bigskip

Let $a_{1},\ldots,a_{s}\in\mathcal{V}_{n}$ and $\alpha_{1},\ldots,\alpha_{s}$
be nonnegative numbers summing to one, and denote by $a_{i}^{(\alpha_{i})}$
the entry-wise power of $a_{i}$ with exponent $\alpha_{i}.$ We call the
Hadamard product
\begin{equation}
a_{1}^{(\alpha_{1})}\circ a_{2}^{(\alpha_{2})}\circ\cdots\circ a_{s}^{\left(
\alpha_{s}\right)  } \label{geomean}%
\end{equation}
the $(\alpha_{1},\ldots,\alpha_{s})$-weighted geometric mean of the columns
$a_{1},\ldots,a_{s}.$ When the specific values of $\alpha_{1},\ldots
,\alpha_{s}$ are unidentified, we just say "a weighted geometric mean of the
columns $a_{1},\ldots,a_{s}$"$.$ If $\alpha_{1}=\cdots=\alpha_{s}=\frac{1}%
{s},$ we simply call (\ref{geomean}) the geometric mean of the columns
$a_{1},\ldots,a_{s}.$ In \cite{FJ3} it was shown that any weighted geometric
mean of the columns of $A\in\mathcal{PC}_{n}$ is efficient for $A.$

\subsection{The convex hull of the columns of a reciprocal matrix}

Denote by $\mathcal{C}(A)$ the (convex) cone generated by the columns of
$A\in\mathcal{PC}_{n}$, that is, the set of nonzero linear combinations, with
nonnegative coefficients, of the columns of $A$. Note that, according to our
definition, $0$ is not in $\mathcal{C}(A)$. The right Perron vector of $A$ and
the Perron vector of $AA^{T}$ lie in $\mathcal{C}(A)$ \cite{FJ7}$.$

We recall that the (right) Perron vector of an $n$-by-$n$ positive matrix $A$
is a (right) eigenvector of $A$ associated with the spectral radius $\rho(A)$
of $A$ (called the Perron eigenvalue of $A$) \cite{HJ}. It is known that, up
to a constant factor, all its entries are positive. And, after normalization
(such as making the entry sum $1$) it is unique.

When $A$ is consistent, $A=\left[  \frac{w_{i}}{w_{j}}\right]  $ for some
$w\in\mathcal{V}_{n},$ any column of $A\ $ is a multiple of $w.$ In this case,
$\mathcal{C}(A)$ is formed by the positive multiples of $w$ and is precisely
$\mathcal{E}(A).$

Any column of $A\in\mathcal{PC}_{n}$ is efficient for $A$ \cite{FJ1}. However,
it may happen that $\mathcal{C}(A)$ is not contained in $\mathcal{E}(A)$,
contrasting with the fact that the set of all vectors in the geometric convex
hull of the columns of $A$ is contained in $\mathcal{E}(A)$ \cite{FJ3}. Of
course, in this case $\mathcal{E}(A)$ is not convex. When $\mathcal{C}(A)$ is
contained in $\mathcal{E}(A),$ the efficiency of both the Perron vector and
the singular vector of $A$ is guaranteed (though each or both of these vectors
may be efficient when $\mathcal{C}(A)$ is not contained in $\mathcal{E}(A)$
\cite{FJ7}).

\bigskip

The following lemma was noted in \cite{FJ7}.

\begin{lemma}
\label{lconvex2}Let $A$ and $S$ be $n$-by-$n$ matrices, with $S$ monomial. If
$\mathcal{C}(A)$ is the cone generated by the columns of $A$ then
$S\mathcal{C}(A)$ is the cone generated by the columns of $SAS^{-1}.$
\end{lemma}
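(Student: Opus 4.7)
The plan is to exploit the explicit action of a monomial matrix on standard basis vectors and thereby identify the columns of $SAS^{-1}$ with positive multiples of $S$ applied to (a permutation of) the columns of $A$. Once this identification is in place, the conclusion is immediate from the linearity of $S$ and the definition of the cone generated by a set of vectors.

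First I would fix notation by writing $S = P_\sigma D$, where $P_\sigma$ is the permutation matrix of a permutation $\sigma$ of $\{1,\ldots,n\}$ and $D$ is a positive diagonal matrix; then $S^{-1}$ is also monomial. In particular, for each $i$ there exist a positive scalar $\mu_i$ and an index $\tau(i)$ with $S^{-1}e_i = \mu_i e_{\tau(i)}$, and $\tau$ is a permutation of $\{1,\ldots,n\}$. Denoting the $j$th column of $A$ by $\mathbf{a}_j$, the $i$th column of $SAS^{-1}$ is then
\[
SAS^{-1}e_i = \mu_i\, S A e_{\tau(i)} = \mu_i\, S\mathbf{a}_{\tau(i)}.
\]
Thus the columns of $SAS^{-1}$ are, up to positive scalar factors, precisely the vectors $S\mathbf{a}_1,\ldots,S\mathbf{a}_n$ in some order.

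Next I would observe that scaling the generators of a convex cone by positive numbers and permuting them does not change the cone. Hence the cone generated by the columns of $SAS^{-1}$ coincides with the cone generated by $\{S\mathbf{a}_1,\ldots,S\mathbf{a}_n\}$, which by linearity of $S$ equals
\[
\Big\{\sum_{j=1}^{n} c_j\, S\mathbf{a}_j : c_j\ge 0,\ (c_1,\ldots,c_n)\ne 0\Big\}
= S\Big\{\sum_{j=1}^{n} c_j\,\mathbf{a}_j : c_j\ge 0,\ (c_1,\ldots,c_n)\ne 0\Big\}
= S\mathcal{C}(A).
\]
Finally, since $S$ is invertible and $0 \notin \mathcal{C}(A)$, also $0 \notin S\mathcal{C}(A)$, consistent with the convention adopted in the paper.

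There is no substantive obstacle here: the only thing that requires a moment of care is tracking the permutation $\tau$ and the positive factors $\mu_i$ that appear when $S^{-1}$ acts on the standard basis, and confirming that these do not alter the generated cone. Everything else is a direct application of linearity and the definition of a monomial matrix.
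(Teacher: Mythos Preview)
Your argument is correct. The paper itself does not supply a proof of this lemma; it simply cites it from \cite{FJ7}. Your approach---identifying the $i$th column of $SAS^{-1}$ as $\mu_i\, S\mathbf{a}_{\tau(i)}$ via the action of the monomial matrix $S^{-1}$ on standard basis vectors, and then observing that positive rescaling and permutation of generators leave the generated cone unchanged---is exactly the natural one-paragraph verification one would expect, and nothing is missing.
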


From Lemmas \ref{lsim} and \ref{lconvex2}, we get the following.

\begin{lemma}
\label{cconv}Let $A\in\mathcal{PC}_{n}$ and $S$ be an $n$-by-$n$ monomial
matrix. Then, $\mathcal{C}(A)\subseteq\mathcal{E}(A)$ if and only if
$\mathcal{C}(SAS^{-1})\subseteq\mathcal{E}(SAS^{-1})$.
\end{lemma}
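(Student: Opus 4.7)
The plan is to deduce this directly from Lemmas \ref{lsim} and \ref{lconvex2}, since together they show that the map $w\mapsto Sw$ carries $\mathcal{E}(A)$ into $\mathcal{E}(SAS^{-1})$ and $\mathcal{C}(A)$ onto $\mathcal{C}(SAS^{-1})$. The only preliminary observation I need is that, since $S$ is monomial (hence invertible with $S^{-1}$ also monomial), both lemmas apply equally well with $S$ replaced by $S^{-1}$ and $A$ replaced by $SAS^{-1}$.

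First I would prove the forward implication. Assume $\mathcal{C}(A)\subseteq\mathcal{E}(A)$ and pick an arbitrary $v\in\mathcal{C}(SAS^{-1})$. By Lemma \ref{lconvex2}, $\mathcal{C}(SAS^{-1})=S\mathcal{C}(A)$, so there exists $w\in\mathcal{C}(A)$ with $v=Sw$. The hypothesis gives $w\in\mathcal{E}(A)$, and then Lemma \ref{lsim} yields $v=Sw\in\mathcal{E}(SAS^{-1})$, as desired.

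For the converse, I would simply apply the same argument to the matrix $B:=SAS^{-1}$ and the monomial matrix $T:=S^{-1}$. Then $TBT^{-1}=A$, so assuming $\mathcal{C}(B)\subseteq\mathcal{E}(B)$, the forward implication (already proved) gives $\mathcal{C}(TBT^{-1})\subseteq\mathcal{E}(TBT^{-1})$, i.e.\ $\mathcal{C}(A)\subseteq\mathcal{E}(A)$.

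There is no real obstacle here: the lemma is a bookkeeping statement, and the only thing to watch is that the chosen $S$ and its inverse genuinely stay inside the class of monomial matrices so that Lemmas \ref{lsim} and \ref{lconvex2} can be invoked in both directions. Once that symmetry is noted, the argument is essentially two lines in each direction.
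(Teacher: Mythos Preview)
Your argument is correct and is exactly the approach the paper indicates: the lemma is stated as an immediate consequence of Lemmas \ref{lsim} and \ref{lconvex2}, and your write-up simply spells out the two-line deduction in each direction, including the observation that $S^{-1}$ is again monomial so the forward implication can be reused for the converse.
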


\subsection{Efficient vectors for block perturbed consistent matrices}

We say that a reciprocal matrix $M\in\mathcal{PC}_{n}$ is an $s$\emph{-block
perturbed consistent matrix}$,$ $1<s<n,$ if it is obtained from a consistent
matrix by modifying (reciprocally) an $s$-by-$s$ principal submatrix. Such a
matrix is monomially similar to a matrix of the form%
\begin{equation}
A=A_{n}(B):=\left[
\begin{array}
[c]{cc}%
B & J_{s,n-s}\\
J_{n-s,s} & J_{n-s}%
\end{array}
\right]  \in\mathcal{PC}_{n}, \label{BJ}%
\end{equation}
with $B\in\mathcal{PC}_{s}$ \cite{FJ4}$.$ In fact, if $M$ is obtained from the
consistent matrix $\left[  \frac{w_{i}}{w_{j}}\right]  ,$ then, for
$D=\operatorname*{diag}(w),$ $D^{-1}MD$ has all entries equal to $1,$ except
those in the positions of the modified entries. Then, by a permutation
similarity, the $s$-by-$s$ principal submatrix containing the modified entries
may be put in the left upper corner.

Based on Lemmas \ref{lsim} and \ref{cconv}, for the purpose of studying the
efficiency of the vectors in the convex hull of the columns of an $s$-block
perturbed consistent matrix, we may assume the matrix has form (\ref{BJ}).

In \cite{CFF} the description of the efficient vectors for a $2$-block
perturbed consistent matrix was given. Such a matrix is monomially similar to
a matrix of the form%
\[
S_{n}(x)=\left[
\begin{tabular}
[c]{c|c}%
$%
\begin{array}
[c]{cc}%
1 & x\\
\frac{1}{x} & 1
\end{array}
$ & $J_{2,n-2}$\\\hline
$J_{n-2,2}$ & $J_{n-2}$%
\end{tabular}
\ \ \ \right]  \in\mathcal{PC}_{n},
\]
for some $x>0.$

\begin{theorem}
\cite{CFF}\label{tmain} Let $n\geq3,$ $x>0$. Then, a vector $w\in
\mathcal{V}_{n}$ as in (\ref{ww1}) is efficient for $S(x)$ if and only if%
\[
w_{2}\leq w_{k}\leq w_{1}\leq xw_{2}\qquad\text{for }3\leq k\leq n
\]
or%
\[
w_{2}\geq w_{k}\geq w_{1}\geq xw_{2}\qquad\text{for }3\leq k\leq n.\text{ }%
\]

\end{theorem}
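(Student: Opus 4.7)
The plan is to apply the digraph characterization of efficiency from Theorem~\ref{blanq}, so the task reduces to determining exactly when $G(S_n(x),w)$ is strongly connected. Since every entry of $S_n(x)$ equals $1$ except $a_{12}=x$ and $a_{21}=1/x$, the edge $i\to j$ in $G(S_n(x),w)$ is present precisely when $w_i\geq w_j$ whenever $\{i,j\}\neq\{1,2\}$, whereas between the pair $\{1,2\}$ one has $1\to 2$ iff $w_1\geq xw_2$ and $2\to 1$ iff $w_1\leq xw_2$.

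For sufficiency, suppose condition (a) holds, i.e.\ $w_2\leq w_k\leq w_1\leq xw_2$ for $3\leq k\leq n$. Then $2\to 1$ is present, and for each $k\geq 3$ both $1\to k$ and $k\to 2$ are edges, producing the directed cycle $1\to k\to 2\to 1$ through every such $k$ and hence strong connectivity. Condition (b) is handled by reversing every inequality (and every edge) in this argument.

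For necessity, assume $G(S_n(x),w)$ is strongly connected, and consider first the case $w_1<xw_2$, so that only $2\to 1$ is present between the vertices $1$ and $2$. Every directed path can be reduced to a simple one. In a simple path from $1$ to a vertex $k\in\{3,\ldots,n\}$ the vertex $2$ (if it appears at all) occurs only internally, so its neighboring edges in the path are comparison edges; the first edge $1\to v_1$ cannot have $v_1=2$ since $1\to 2$ is absent, so it too is a comparison edge. Hence every edge on such a path is a comparison edge and telescoping forces $w_1\geq w_k$. A symmetric analysis of a simple path from $k$ to $1$ shows that either all its edges are comparison edges (yielding $w_k\geq w_1$) or it terminates with $2\to 1$ and the prefix from $k$ to $2$ consists of comparison edges (yielding $w_k\geq w_2$); combined with $w_1\geq w_2$, obtained similarly from a simple $1\to\cdots\to 2$ path, this yields $w_2\leq w_k\leq w_1$, i.e.\ condition (a). The case $w_1>xw_2$ is symmetric and yields condition (b). In the boundary case $w_1=xw_2$ both cross-edges are present; the same simple-path analysis still forces each $w_k$ to lie between $w_1$ and $w_2$, which delivers (a) when $x\geq 1$ and (b) when $x\leq 1$.

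The main technical point is the simple-path/comparison-chain bookkeeping in the necessity direction: one must verify that, regardless of which of the two edges between $1$ and $2$ are present, a simple directed path cannot escape the monotonicity imposed by the comparison edges, and that the resulting chain of inequalities is forced uniformly for every $k\in\{3,\ldots,n\}$. Once this is pinned down, the equivalence between strong connectivity of $G(S_n(x),w)$ and the two stated inequality patterns falls out.
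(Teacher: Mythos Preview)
The paper does not prove this theorem; it is quoted verbatim from \cite{CFF} as a background result, so there is no in-paper argument to compare against. Your proof via the digraph criterion of Theorem~\ref{blanq} is correct and self-contained. The sufficiency direction is clean. In the necessity direction your simple-path bookkeeping is right: in the case $w_1<xw_2$ the only non-comparison edge present is $2\to 1$, and on a simple path it can occur only as the terminal edge of a $k\to\cdots\to 1$ path, which is exactly the dichotomy you use. The one place that is a bit compressed is the boundary case $w_1=xw_2$: your assertion that ``each $w_k$ lies between $w_1$ and $w_2$'' is true, and follows because any simple $1\to\cdots\to k$ path yields $w_k\le\max(w_1,w_2)$ (it is either all comparison edges, or starts $1\to 2$ followed by comparison edges) while any simple $k\to\cdots\to 1$ path yields $w_k\ge\min(w_1,w_2)$ by the symmetric split; writing that sentence out explicitly would make the argument airtight.
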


Any matrix $A\in\mathcal{PC}_{3}$ is a $2$-block perturbed consistent matrix
\cite{FJ2}. Thus, all the efficient vectors for $A\in\mathcal{PC}_{3}$ are
obtained from Theorem \ref{tmain} using Lemma \ref{lsim}. We explicitly state
this result since it will be used often.

\begin{corollary}
\cite{FJ2}\label{c3por3} Let
\[
\left[
\begin{array}
[c]{ccc}%
1 & a_{12} & a_{13}\\
\frac{1}{a_{12}} & 1 & a_{23}\\
\frac{1}{a_{13}} & \frac{1}{a_{23}} & 1
\end{array}
\right]  .
\]
Then, $w=\left[
\begin{array}
[c]{ccc}%
w_{1} & w_{2} & w_{3}%
\end{array}
\right]  ^{T}\in\mathcal{V}_{3}$ is efficient for $A$ if and only if%
\[
a_{23}w_{3}\leq w_{2}\leq\frac{w_{1}}{a_{12}}\leq\frac{a_{13}}{a_{12}}%
w_{3}\text{\qquad or \qquad}a_{23}w_{3}\geq w_{2}\geq\frac{w_{1}}{a_{12}}%
\geq\frac{a_{13}}{a_{12}}w_{3}.
\]

\end{corollary}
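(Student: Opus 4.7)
The plan is to reduce $A$ to the canonical $2$-block perturbed consistent form $S_3(x)$ by a positive diagonal similarity, apply Theorem \ref{tmain} in that setting, and then translate the resulting chain of inequalities back through the similarity.

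First I would take $D:=\operatorname{diag}(a_{13},a_{23},1)$ and verify, entry by entry, that $D^{-1}AD$ has $1$'s in positions $(1,3),(3,1),(2,3),(3,2)$ (and on the diagonal), while its $(1,2)$ entry equals $x:=a_{12}a_{23}/a_{13}>0$ (and $(2,1)$ is $1/x$). This shows $D^{-1}AD=S_3(x)$ and, as a by-product, confirms that every matrix in $\mathcal{PC}_3$ is a $2$-block perturbed consistent matrix, consistent with the remark immediately preceding the corollary.

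By Lemma \ref{lsim}, $w$ is efficient for $A$ if and only if
$$v\;:=\;D^{-1}w\;=\;\bigl[\,w_1/a_{13},\;w_2/a_{23},\;w_3\,\bigr]^T$$
is efficient for $S_3(x)$. For $n=3$, Theorem \ref{tmain} says that this happens precisely when the chain $v_2\le v_3\le v_1\le xv_2$ holds, or the reverse chain $v_2\ge v_3\ge v_1\ge xv_2$ holds.

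The remaining step is pure algebra: substitute the entries of $v$ and the value of $x$ into each link of the chain and clear denominators. For instance, $v_2\le v_3$ rearranges to $w_2\le a_{23}w_3$; $v_3\le v_1$ rearranges to $(a_{13}/a_{12})w_3\le w_1/a_{12}$; and $v_1\le xv_2$ becomes $w_1\le a_{12}w_2$, i.e.\ $w_1/a_{12}\le w_2$. Chained together, these give exactly one of the two alternatives listed in the corollary, and the reverse chain for $v$ yields the other. The only step requiring care is the bookkeeping of these substitutions; no idea beyond Theorem \ref{tmain} and Lemma \ref{lsim} is required.
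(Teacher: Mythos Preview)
Your approach is correct and is exactly what the paper indicates: the corollary is stated with a citation to \cite{FJ2}, and the surrounding text says only that every $A\in\mathcal{PC}_3$ is a $2$-block perturbed consistent matrix, so the efficient vectors follow from Theorem~\ref{tmain} via Lemma~\ref{lsim}. Your diagonal similarity by $D=\operatorname{diag}(a_{13},a_{23},1)$ carrying $A$ to $S_3(a_{12}a_{23}/a_{13})$ and the translation of the $v$-chain back to the $w$-chain are both accurate; the first chain $v_2\le v_3\le v_1\le xv_2$ in Theorem~\ref{tmain} yields the second (reversed) alternative in the corollary and vice versa, so your concluding remark is fine.
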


\bigskip

Next we characterize the triple perturbed consistent matrices for which the
entries modified from a consistent matrix lie in a $4$-by-$4$ principal
submatrix, which we call $4$\emph{-block triple perturbed consistent matrices}.

A $4$-block triple perturbed consistent matrix in $\mathcal{PC}_{n}$,
$n\geq5,$ monomial similar to a matrix of the form
\begin{equation}
\left[
\begin{tabular}
[c]{c|c}%
$%
\begin{array}
[c]{cccc}%
1 & 1 & a_{13} & a_{14}\\
1 & 1 & 1 & a_{24}\\
\frac{1}{a_{13}} & 1 & 1 & 1\\
\frac{1}{a_{14}} & \frac{1}{a_{24}} & 1 & 1
\end{array}
$ & $J_{4,n-4}$\\\hline
$J_{n-4,4}$ & $J_{n-4}$%
\end{tabular}
\ \right]  \label{triple}%
\end{equation}
is called a $4$\emph{-block triangular perturbed consistent matrix}. A
reciprocal matrix obtained from a consistent one by modifying one row (and
column) is called a \emph{column perturbed consistent matrix}.

As mentioned in the introduction, based on possible intersections of the row
indices and the column indices of the perturbed entries in a $4$-block triple
perturbed consistent matrix, we have the following.

\begin{proposition}
\label{proptriple}Let $A\in\mathcal{PC}_{n}$ be a $4$-block triple perturbed
consistent matrix. Then $A$ is either a column perturbed consistent matrix, a
$3$-block perturbed consistent matrix or a $4$-block triangular perturbed
consistent matrix.
\end{proposition}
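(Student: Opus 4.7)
The plan is a short combinatorial case analysis on the positions of the modified pairs. Since permutation similarity preserves each of the three named classes (column perturbed, $3$-block, $4$-block triangular), I may relabel the indices so that all modifications lie in the leading $4$-by-$4$ principal submatrix. Let $P\subseteq\{(i,j):1\leq i<j\leq 4\}$ be the set of modified positions above the diagonal (so $|P|\leq 3$), and view $P$ as the edge set of a simple undirected graph $G$ on the vertex set $\{1,2,3,4\}$.

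If the edges of $G$ touch only three (or fewer) vertices, then all modifications sit inside a $3$-by-$3$ principal submatrix and $A$ is a $3$-block perturbed consistent matrix. This disposes of $|P|\leq 1$, $|P|=2$ when the two edges share a vertex, and $|P|=3$ when $G\cong K_3$. The remaining possibilities have $G$ on exactly four vertices: either $|P|=2$ with two disjoint edges, or $|P|=3$ with $G$ a connected graph on four vertices and three edges; in the latter case $G$ is forced to be either the star $K_{1,3}$ or the path $P_4$.

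When $G$ is two disjoint edges, a permutation brings them to $\{(1,3),(2,4)\}$, which matches the pattern (\ref{triple}) with one of the parameters equal to $1$, so $A$ is a $4$-block triangular perturbed consistent matrix. When $G\cong K_{1,3}$ with center $v$, every edge is incident to $v$, so every modification of $A$ is in row $v$ or column $v$, and $A$ is a column perturbed consistent matrix. When $G\cong P_4$, say the path $a-b-c-d$, the permutation sending $(a,b,c,d)\mapsto(3,1,4,2)$ carries its three edges precisely to $\{(1,3),(1,4),(2,4)\}$, which is the pattern in (\ref{triple}); hence $A$ is monomially similar to such a matrix and is $4$-block triangular perturbed. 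The only genuinely substantive step, in my view, is fixing the dictionary between graph shapes and matrix classes: specifically, that the star (whose modifications touch all four indices through one common vertex) corresponds to a column perturbation rather than to a $4$-block form, and conversely that the path is exactly the shape realizing the $4$-block triangular pattern.
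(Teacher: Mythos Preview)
Your argument is correct and follows the same combinatorial case analysis the paper has in mind; the paper does not give a formal proof but only the one-line justification preceding the proposition (together with the discussion in the introduction about intersections of row and column indices of the perturbed entries). Your graph-theoretic recasting---viewing the at most three modified upper-diagonal positions as edges of a graph on four vertices and classifying by whether the resulting graph touches $\leq 3$ vertices, or is two disjoint edges, $K_{1,3}$, or $P_4$---is a clean and explicit rendering of that same idea, and the permutations you write down verify the identifications with the form~(\ref{triple}) precisely.
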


In \cite{FJ7} it was shown that, if $A\in\mathcal{PC}_{n}$ is a column
perturbed consistent matrix, then $\mathcal{C}(A)\subseteq\mathcal{E}(A).$ In
particular, the singular vector and the Perron vector of such a matrix are efficient.

Here we show that the known sufficient conditions for the efficiency of the
Perron vector of a matrix $A$ that is either a $3$-block perturbed consistent
matrix \cite{FJ4} or a $4$-block triangular perturbed consistent matrix
\cite{FerFur,Ro} (with some additional boundary conditions in the latter case)
are, in fact, necessary and sufficient for all the vectors in $\mathcal{C}(A)$
to be efficient. Thus, taking into account Proposition \ref{proptriple} and
the result in \cite{FJ7} for column perturbed consistent matrices, a complete
characterization of the $4$-block triple perturbed consistent matrices $A$ for
which $\mathcal{C}(A)\subseteq\mathcal{E}(A)$ follows.

\bigskip

As a consequence of the next known lemma, we give a helpful result in the
study of the efficiency of the convex combinations of the columns of an
$s$-block perturbed consistent matrix.

\begin{lemma}
\cite{FJ4}\label{laux} Let $A\in\mathcal{PC}_{n}$ as in (\ref{BJ}), with
$B\in\mathcal{PC}_{s}$ and $n>s+1.$ Let $w=\left[
\begin{array}
[c]{cccc}%
w_{1} & w_{2} & \cdots & w_{n}%
\end{array}
\right]  ^{T}\in\mathcal{V}_{n}.$ If $w_{p}=w_{q},$ for some $p,q\in
\{s+1,\ldots,n\}$ with $p\neq q,$ then $w$ is efficient for $A$ if and only if
$w(p)$ is efficient for $A(p)=A_{n-1}(B).$
\end{lemma}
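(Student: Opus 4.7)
The plan is to work entirely through Theorem \ref{blanq}, so that the claim reduces to the equivalence of strong connectivity of $G(A,w)$ and of $G(A(p),w(p))$. The hypothesis $w_p = w_q$ together with $p,q$ both lying outside the perturbed $s$-by-$s$ block imposes a very strong symmetry on the digraph, and this symmetry should deliver the equivalence almost mechanically.

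First I would record a structural observation. Because $p, q \in \{s+1, \ldots, n\}$, every entry $a_{ip}, a_{iq}, a_{pi}, a_{qi}$ with $i \notin \{p, q\}$ equals $1$; in particular rows $p, q$ and columns $p, q$ of $A$ are identical. Combined with $w_p = w_q$, this means that in $G(A,w)$ the vertices $p$ and $q$ are \emph{twins}: for every $i \notin \{p, q\}$ the edge $i \to p$ is present if and only if $i \to q$ is, and symmetrically $p \to i$ is present if and only if $q \to i$ is; moreover both $p \to q$ and $q \to p$ are present because $a_{pq} = a_{qp} = 1$ and $w_p = w_q$.

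Next I would exploit the twin property to transfer paths between the two digraphs. For the ``if'' direction, assuming $G(A(p), w(p))$ is strongly connected, I would reach $p$ from any other vertex (and conversely) by composing a suitable path through $q$ in $G(A(p), w(p))$ with the edge $q \to p$ or $p \to q$. For the ``only if'' direction, I would start from a path in $G(A,w)$ connecting two vertices different from $p$ and reroute it to avoid $p$, either by substituting $q$ for an interior occurrence of $p$, or, when $q$ already lies adjacent to $p$ on the path, by contracting a segment of the form $q \to p \to v$ (or $v \to p \to q$) into the single edge $q \to v$ (or $v \to q$).

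The only step that really demands attention is this rerouting, where one must check a small number of cases depending on whether $q$ is adjacent to $p$ on the path and verify that the modified walk uses only edges of $G(A(p), w(p))$. All of these checks follow directly from the twin relations, so the substance of the argument lies in the structural observation rather than in any delicate estimate.
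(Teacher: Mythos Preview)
The paper does not supply its own proof of this lemma; it is quoted from \cite{FJ4} and used as a black box. So there is no in-paper argument to compare against.

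That said, your proposal is correct and is the natural argument. The twin observation is exactly right: since $p,q>s$, rows and columns $p$ and $q$ of $A$ consist entirely of $1$'s, so for every $i\notin\{p,q\}$ the conditions $w_i\geq a_{ip}w_p$ and $w_i\geq a_{iq}w_q$ coincide (and likewise in the other direction), while $a_{pq}=a_{qp}=1$ together with $w_p=w_q$ gives the double edge $p\leftrightarrow q$. Because $G(A(p),w(p))$ is literally the induced subdigraph of $G(A,w)$ on $\{1,\ldots,n\}\setminus\{p\}$, both implications follow just as you outline: appending $p\to q$ or $q\to p$ handles the ``if'' direction, and for the ``only if'' direction any path in $G(A,w)$ between two vertices $\neq p$ can be rerouted by replacing each interior occurrence of $p$ by $q$ and then contracting any resulting $q\to q$ repetition. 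The case check you flag (when $q$ is adjacent to $p$ on the path) is handled correctly by that contraction, since the twin relations guarantee the needed edge $q\to v$ or $v\to q$. No gap.
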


We then have the following.

\begin{lemma}
\label{lemablock}Let $A\in\mathcal{PC}_{n}$ as in (\ref{BJ}), with
$B\in\mathcal{PC}_{s}$ and $n>s+1.$ Then $\mathcal{C}(A)\subseteq
\mathcal{E}(A)$ if and only if $\mathcal{C}(A\left[  \{1,\ldots,s+1\}\right]
)\subseteq\mathcal{E}(A\left[  \{1,\ldots,s+1\}\right]  ).$
\end{lemma}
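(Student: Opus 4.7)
The plan is to exploit the structural rigidity of vectors in $\mathcal{C}(A)$ inherited from the block form (\ref{BJ}): since the last $n-s$ columns of $A$ are all equal to $\mathbf{e}_n$ (the bottom-right block is $J_{n-s}$ and the top-right block is $J_{s,n-s}$), every vector $w \in \mathcal{C}(A)$ must satisfy $w_{s+1} = w_{s+2} = \cdots = w_n$. This is the key observation that will let me apply Lemma \ref{laux} repeatedly to peel off indices from the bottom and reduce the size of the ambient matrix from $n$ down to $s+1$.

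First I would verify that the map ``restrict to the first $s+1$ coordinates'' gives a bijection between $\mathcal{C}(A)$ and $\mathcal{C}(A[\{1,\ldots,s+1\}])$. Indeed, if $w = \sum_{j=1}^n c_j A_{\cdot,j}$ with $c_j \geq 0$, then because $A_{\cdot,j}[\{1,\ldots,s+1\}] = \mathbf{e}_{s+1}$ for every $j \geq s+1$, the restriction equals $\sum_{j=1}^s c_j A_{\cdot,j}[\{1,\ldots,s+1\}] + \bigl(\sum_{j \geq s+1} c_j\bigr)\mathbf{e}_{s+1}$, which lies in $\mathcal{C}(A[\{1,\ldots,s+1\}])$ (the last column of $A[\{1,\ldots,s+1\}]$ being $\mathbf{e}_{s+1}$). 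Conversely, any $\tilde w \in \mathcal{C}(A[\{1,\ldots,s+1\}])$, written as such a nonnegative combination, extends to a vector $w \in \mathcal{C}(A)$ by using the same coefficients on the first $s$ columns of $A$ and lumping the coefficient of the $(s+1)$th column of $A[\{1,\ldots,s+1\}]$ onto the $(s+1)$th column of $A$ (equivalently, distributing it arbitrarily among columns $s+1,\ldots,n$). The extended $w$ automatically satisfies $w_{s+1} = \cdots = w_n$ and restricts to $\tilde w$.

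Next I would invoke Lemma \ref{laux} iteratively. Take $w \in \mathcal{C}(A)$ and set $w^{(0)} := w$, $A^{(0)} := A$. For $k = 1, 2, \ldots, n-s-1$, let $w^{(k)} := w^{(k-1)}(n-k+1)$ and $A^{(k)} := A^{(k-1)}(n-k+1) = A_{n-k}(B)$. At the $k$th step, the indices $s+1$ and $n-k+1$ both lie in $\{s+1,\ldots,n-k+1\}$ and correspond to equal entries of $w^{(k-1)}$ (this equality is preserved under deletion of indices from the same equal block), so Lemma \ref{laux} applies and yields $w^{(k-1)} \in \mathcal{E}(A^{(k-1)}) \Longleftrightarrow w^{(k)} \in \mathcal{E}(A^{(k)})$. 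After $n-s-1$ steps I reach $A^{(n-s-1)} = A[\{1,\ldots,s+1\}]$ and $w^{(n-s-1)} = w[\{1,\ldots,s+1\}]$, concluding that $w \in \mathcal{E}(A) \Longleftrightarrow w[\{1,\ldots,s+1\}] \in \mathcal{E}(A[\{1,\ldots,s+1\}])$.

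Combining the two pieces gives both implications of the lemma. I do not anticipate a genuine obstacle: the whole argument is a routine application of Lemma \ref{laux} once one pins down the structural fact that vectors in $\mathcal{C}(A)$ are constant on the last $n-s$ coordinates. The only point that requires a moment of care is checking that Lemma \ref{laux} remains applicable at each iteration, i.e.\ that the shrinking matrix $A^{(k)}$ still has the form $A_{n-k}(B)$ with $n-k > s+1$ throughout $k < n-s-1$, which is immediate.
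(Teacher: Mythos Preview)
Your proof is correct and follows essentially the same approach as the paper: observe that every $w\in\mathcal{C}(A)$ is constant on coordinates $s+1,\ldots,n$, establish the obvious correspondence between $\mathcal{C}(A)$ and $\mathcal{C}(A[\{1,\ldots,s+1\}])$, and use Lemma~\ref{laux} to pass efficiency back and forth. The paper's write-up is simply more terse, invoking Lemma~\ref{laux} once for each direction without spelling out the iteration, while you make the repeated application explicit.
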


\begin{proof}
($\Leftarrow$) Let $w\in\mathcal{C}(A).$ Then $w$ has the form
\begin{equation}
w=\left[
\begin{array}
[c]{cccccc}%
w_{1} & \cdots & w_{s} & w_{s+1} & \cdots & w_{s+1}%
\end{array}
\right]  ^{T}\in\mathcal{V}_{n}. \label{ws}%
\end{equation}
Let%
\begin{equation}
u=\left[
\begin{array}
[c]{cccc}%
w_{1} & \cdots & w_{s} & w_{s+1}%
\end{array}
\right]  ^{T}. \label{uu}%
\end{equation}
We have $u\in\mathcal{C}(A\left[  \{1,\ldots,s+1\}\right]  ).$ By the
hypothesis, $u\in\mathcal{E}(A\left[  \{1,\ldots,s+1\}\right]  ).$ By Lemma
\ref{laux}, $w\in\mathcal{E}(A).$

($\Rightarrow$) Let $u\in\mathcal{C}(A\left[  \{1,\ldots,s+1\}\right]  ),$
with $u$ as in (\ref{uu}). Then $w\in\mathcal{C}(A),$ with $w$ as in
(\ref{ws}). By the hypothesis, $w\in\mathcal{E}(A).$ By Lemma \ref{laux},
$u\in\mathcal{E}(A\left[  \{1,\ldots,s+1\}\right]  ).$
\end{proof}

\section{A necessary condition for the efficiency of $\mathcal{C}%
(A)$\label{s3}}

When $A\in\mathcal{PC}_{n}$ is a $2$-block perturbed consistent matrix, the
efficient vectors for $A$ are defined by a finite set of linear inequalities
in their entries, as follows from Lemma \ref{lsim} and Theorem \ref{tmain}.
Thus, $\mathcal{E}(A)$ is convex. Since any column of $A$ is efficient for
$A,$ we get that $\mathcal{C}(A)\subseteq\mathcal{E}(A).$ When $n=3,$ we have
the following.

\begin{proposition}
\label{prop3}Let $A\in\mathcal{PC}_{3}$. Then $\mathcal{E}(A)=\mathcal{C}(A)$.
\end{proposition}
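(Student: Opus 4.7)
The inclusion $\mathcal{C}(A)\subseteq\mathcal{E}(A)$ is already recorded in the paragraph immediately preceding the statement, since every column of $A$ is efficient and $\mathcal{E}(A)$ is convex for a $2$-block perturbed consistent matrix, which includes all of $\mathcal{PC}_{3}$. So the substance of the proposition is the reverse inclusion $\mathcal{E}(A)\subseteq\mathcal{C}(A)$. My plan is to reduce to a canonical representative and then exhibit an arbitrary efficient vector as an explicit nonnegative combination of the columns.

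Since any $A\in\mathcal{PC}_{3}$ is a $2$-block perturbed consistent matrix, $A$ is monomially similar to $S_{3}(x)$ for some $x>0$. By Lemma \ref{lsim} and Lemma \ref{cconv}, both $\mathcal{E}(\cdot)$ and $\mathcal{C}(\cdot)$ transform equivariantly under monomial similarity, so it is enough to verify $\mathcal{E}(S_{3}(x))=\mathcal{C}(S_{3}(x))$ for all $x>0$. Moreover, the permutation similarity swapping indices $1$ and $2$ carries $S_{3}(x)$ to $S_{3}(1/x)$, so I may additionally assume $x\geq1$. The case $x=1$ is immediate: $S_{3}(1)=J_{3}$ is consistent and both sets reduce to the positive multiples of $\mathbf{e}_{3}$.

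For $x>1$, Theorem \ref{tmain} (equivalently Corollary \ref{c3por3}) characterizes efficient vectors for $S_{3}(x)$ as those $w\in\mathcal{V}_{3}$ satisfying $w_{2}\leq w_{3}\leq w_{1}\leq xw_{2}$; the alternative chain is vacuous when $x>1$ since it would force $w_{1}\geq xw_{2}>w_{2}\geq w_{1}$. Given such a $w$, I would solve the linear system $w=\alpha c_{1}+\beta c_{2}+\gamma c_{3}$, where $c_{1},c_{2},c_{3}$ are the columns of $S_{3}(x)$. A short direct computation produces
$$
\alpha=\frac{x(w_{3}-w_{2})}{x-1},\qquad \beta=\frac{w_{1}-w_{3}}{x-1},\qquad \gamma=\frac{xw_{2}-w_{1}}{x-1},
$$
and each coefficient is nonnegative by exactly one link of the efficiency chain, while at least one is strictly positive because $w$ is positive. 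Hence $w\in\mathcal{C}(S_{3}(x))$, which yields the desired inclusion.

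There is no real obstacle in this argument: once the monomial-similarity reduction to $S_{3}(x)$ with $x\geq 1$ is in place, the proof is essentially a $3\times 3$ bookkeeping calculation pairing each boundary hyperplane of the efficient cone with one of the three columns. The mild care required is the vacuity check in the second branch of Theorem \ref{tmain} and observing that the three computed coefficients correspond one-to-one with the three efficiency inequalities, so that their nonnegativity is not just a consequence of but equivalent to efficiency.
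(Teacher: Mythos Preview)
Your proof is correct and follows essentially the same approach as the paper: reduce via monomial similarity to a canonical $3\times 3$ form, invoke the characterization of efficient vectors from Theorem~\ref{tmain}, and explicitly exhibit the nonnegative coefficients expressing an arbitrary efficient vector as a combination of the columns. The only cosmetic differences are that the paper places the perturbed entry in position $(1,3)$ rather than $(1,2)$, and isolates the degenerate case as ``all entries of $w$ equal'' rather than as $x=1$; the resulting formulas for the coefficients are the same up to this relabeling.
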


\begin{proof}
Since $A$ is a $2$-block perturbed consistent matrix, we have $\mathcal{C}%
(A)\subseteq\mathcal{E}(A).$ Thus, we just need to see that $\mathcal{E}%
(A)\subseteq\mathcal{C}(A).$ By Lemmas \ref{lsim} and \ref{lconvex2}, we may
assume that
\[
A=\left[
\begin{array}
[c]{ccc}%
1 & 1 & x\\
1 & 1 & 1\\
\frac{1}{x} & 1 & 1
\end{array}
\right]  ,
\]
with $x\geq1.$ Suppose that $w$ is efficient for $A.$ Then, by Theorem
\ref{tmain} (or Corollary \ref{c3por3}),
\begin{equation}
w_{3}\leq w_{2}\leq w_{1}\leq xw_{3}. \label{ww}%
\end{equation}
If all entries of $w$ are equal then $w\in\mathcal{C}(A)$, since any positive
vector proportional to a column is efficient. Suppose that $w$ has two
distinct entries, in which case $x>1.$ We have $w=Ay$ for%
\[
y=\left[
\begin{array}
[c]{c}%
\frac{w_{2}-w_{3}}{x-1}x\\
\frac{xw_{3}-w_{1}}{x-1}\\
\frac{w_{1}-w_{2}}{x-1}%
\end{array}
\right]  .
\]
By (\ref{ww}), $y$ is nonzero and nonnegative. Thus, $w\in\mathcal{C}(A).$
\end{proof}

\bigskip

Next we give a sufficient condition for $\mathcal{C}(A)$ to not be contained
in $\mathcal{E}(A)$, when $A\in\mathcal{PC}_{n}$, $n\geq4$. Taking into
account Lemmas \ref{lsim} and \ref{cconv}, we assume that all entries in the
last row and column of $A$ are $1.$ In fact, if $D$ is the diagonal matrix
whose diagonal is the $i$th column of $A^{\prime},$ then the $i$th row and
column of $D^{-1}A^{\prime}D$ have all entries equal to $1.$ With an
additional permutation similarity, the row and column can be made the last.
Thus, given a matrix in $\mathcal{PC}_{n},$ the next theorem can be applied to
each matrix monomial similar to it and with the last row and column having all
entries equal to $1.$

\begin{theorem}
Let $B\in\mathcal{PC}_{n-1},$ $n\geq4,$ and
\[
A=\left[
\begin{array}
[c]{cc}%
B & \mathbf{e}_{n-1}\\
\mathbf{e}_{n-1}^{T} & 1
\end{array}
\right]  \in\mathcal{PC}_{n}.
\]
Let $y=\left[
\begin{array}
[c]{ccc}%
y_{1} & \cdots & y_{n-1}%
\end{array}
\right]  ^{T}$ be a nonnegative vector with entries summing to $1.$ If $By$ is
(entry-wise) strictly greater than $\mathbf{e}_{n-1}$ or strictly less than
$\mathbf{e}_{n-1},$ then $A\left[
\begin{array}
[c]{cc}%
y^{T} & 0
\end{array}
\right]  ^{T}\notin\mathcal{E}(A).$ Thus, $\mathcal{C}(A)$ is not contained in
$\mathcal{E}(A).$ In particular, $\mathcal{E}(A)$ is not convex.
\end{theorem}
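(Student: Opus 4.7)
The plan is to reduce the statement to an application of Theorem~\ref{blanq} by identifying exactly what vector $w := A\left[\begin{array}{cc} y^T & 0 \end{array}\right]^T$ looks like.

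First, I would write out the block product. Since $y$ is supported on the first $n-1$ coordinates and has entries summing to $1$, the structure of $A$ gives
\[
w = \left[\begin{array}{c} By \\ \mathbf{e}_{n-1}^T y \end{array}\right] = \left[\begin{array}{c} By \\ 1 \end{array}\right].
\]
In particular $w$ is positive (because $B$ is entry-wise positive, $y\neq 0$, and $y\geq 0$), so $w\in\mathcal{V}_n$ and $w\in\mathcal{C}(A)$ with the last column of $A$ receiving coefficient zero.

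Next I would examine the directed graph $G(A,w)$, focusing on the edges incident with vertex $n$. Every entry in the last row and last column of $A$ equals $1$, and $w_n=1$. Hence for each $i<n$ we have $i\to n$ in $G(A,w)$ iff $w_i\geq 1$, i.e.\ $(By)_i\geq 1$; and $n\to i$ iff $w_i\leq 1$, i.e.\ $(By)_i\leq 1$. This is the key reduction: the hypothesis on $By$ immediately controls all edges at vertex $n$.

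From here both cases finish in one line. If $By$ is strictly greater than $\mathbf{e}_{n-1}$, then $(By)_i>1$ for every $i<n$, so there is no edge out of vertex $n$ at all; if $By$ is strictly less than $\mathbf{e}_{n-1}$, then symmetrically there is no edge into vertex $n$. In either case $G(A,w)$ fails to be strongly connected, so by Theorem~\ref{blanq}, $w\notin\mathcal{E}(A)$. Since $w\in\mathcal{C}(A)$, this shows $\mathcal{C}(A)\not\subseteq\mathcal{E}(A)$; and because every column of $A$ is itself efficient while the convex combination $w$ of those columns is not, $\mathcal{E}(A)$ cannot be convex. There is no real obstacle in this argument---the whole proof hinges on the observation that the last coordinate of $Ay$ is exactly $1$, which turns the strict inequalities on $By$ directly into a disconnection of vertex $n$ in $G(A,w)$.
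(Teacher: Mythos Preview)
Your proposal is correct and follows essentially the same approach as the paper: compute $w=[By;\,1]$, observe that the strict inequality on $By$ forces vertex $n$ to be a sink (respectively a source) in $G(A,w)$, and conclude inefficiency via Theorem~\ref{blanq}. The paper's proof is slightly terser but identical in substance; your added remarks on positivity of $w$ and on why $\mathcal{E}(A)$ fails to be convex are welcome elaborations.
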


\begin{proof}
Let $A=[a_{ij}]\ $and let
\[
w=A\left[
\begin{array}
[c]{cccc}%
y_{1} & \cdots & y_{n-1} & 0
\end{array}
\right]  ^{T}=\left[
\begin{array}
[c]{c}%
By\\
1
\end{array}
\right]  .
\]
Note that $w\in\mathcal{C}(A).$ If $By$ is strictly greater than
$\mathbf{e}_{n-1}$, then, for all $i=1,\ldots,n-1,$ we have $\frac{w_{i}%
}{w_{n}}=w_{i}>1=a_{in}$, in which case $n$ is a sink vertex of $G(A,w)$. If
$By$ is strictly less than $\mathbf{e}_{n-1}$, then, for all $i=1,\ldots,n-1,$
we have $\frac{w_{i}}{w_{n}}=w_{i}<1=a_{in}$, in which case $n$ is a source
vertex of $G(A,w)$. In any case, $G(A,w)$ is not strongly connected, implying
that $w$ is inefficient for $A,$ by Theorem \ref{blanq}.
\end{proof}

\bigskip

We observe that in the theorem the number of nonzero entries in $y$ should be
at least $3.$

\bigskip

\begin{example}
Let%
\[
A=\left[
\begin{array}
[c]{cccc}%
1 & 4 & \frac{1}{6} & 1\\
\frac{1}{4} & 1 & 5 & 1\\
6 & \frac{1}{5} & 1 & 1\\
1 & 1 & 1 & 1
\end{array}
\right]  .
\]
The row sum vector of the $3$-by-$3$ principal submatrix $A(4)$ of $A$ is
\[
A(4)\mathbf{e}_{3}=\left[
\begin{array}
[c]{c}%
1+4+\frac{1}{6}\\
\frac{1}{4}+1+5\\
6+\frac{1}{5}+1
\end{array}
\right]  =\left[
\begin{array}
[c]{c}%
\frac{31}{6}\\
\frac{25}{4}\\
\frac{36}{5}%
\end{array}
\right]  ,
\]
which has all entries strictly greater than $3$ (that is, $A(4)(\frac{1}%
{3}\mathbf{e}_{3})$ has all entries strictly greater than $1$)$.$ Then
\[
A\left[
\begin{array}
[c]{c}%
1\\
1\\
1\\
0
\end{array}
\right]  =\left[
\begin{array}
[c]{c}%
\frac{31}{6}\\
\frac{25}{4}\\
\frac{36}{5}\\
3
\end{array}
\right]
\]
is inefficient for $A,$ implying that $\mathcal{C}(A)$ is not contained in
$\mathcal{E}(A)$.
\end{example}

\section{Efficiency of the convex hull of the columns of a $3$-block perturbed
consistent matrix\label{s4}}

Here we give necessary and sufficient conditions for $\mathcal{C}%
(A)\subseteq\mathcal{E}(A)$ when $A$ is a $3$-block perturbed consistent
matrix. By Lemmas \ref{lsim} and \ref{cconv}, we may assume that $A$ is as in
(\ref{BJ}) with $B\in\mathcal{PC}_{3}.$ It is easy to see that there is a
$3$-by-$3$ permutation matrix $P$ such that, for $A^{\prime}=(P\oplus
I_{n-3})A(P^{T}\oplus I_{n-3}),\ $the principal submatrix $A^{\prime
}[\{1,2,3\}]$ of $A^{\prime}$ has one of the following forms:
\[
\text{ }\left[
\begin{array}
[c]{ccc}%
1 & \geq1 & \geq1\\
\leq1 & 1 & \geq1\\
\leq1 & \leq1 & 1
\end{array}
\right]  \text{ \qquad or \qquad}\left[
\begin{array}
[c]{ccc}%
1 & <1 & >1\\
>1 & 1 & <1\\
\leq1 & >1 & 1
\end{array}
\right]  ,
\]
in which $\geq1$ (resp. $\leq1,$ $<1,$ $>1$) represents an entry $\geq1$
(resp. $\leq1,$ $<1,$ $>1$). Moreover, with a possible additional permutation
similarity of the same type, we may, and do, assume that $A$ has the form
\begin{equation}
\left[
\begin{tabular}
[c]{c|c}%
$%
\begin{array}
[c]{ccc}%
1 & a_{12} & a_{13}\\
\frac{1}{a_{12}} & 1 & a_{23}\\
\frac{1}{a_{13}} & \frac{1}{a_{23}} & 1
\end{array}
$ & $J_{3,n-3}$\\\hline
$J_{n-3,3}$ & $J_{n-3,n-3}$%
\end{tabular}
\ \right]  , \label{MAtriple}%
\end{equation}
with $a_{12},a_{13},a_{23}>0$ satisfying one of the following four conditions:%
\begin{align}
\text{i) }a_{13}  &  \geq1\text{ and }a_{12},a_{23}>1,\text{ \quad ii) }%
a_{12}\geq a_{13}\geq a_{23}=1,\text{ }\nonumber\\
& \label{cond11}\\
\text{iii) }a_{23}  &  \geq a_{13}\geq a_{12}=1,\text{ \quad or\quad\ iv)
}a_{13}>1\text{ and }a_{12},a_{23}<1.\nonumber
\end{align}

We start by considering the case $A\in\mathcal{PC}_{4}:$%
\begin{equation}
A=\left[
\begin{array}
[c]{cccc}%
1 & a_{12} & a_{13} & 1\\
\frac{1}{a_{12}} & 1 & a_{23} & 1\\
\frac{1}{a_{13}} & \frac{1}{a_{23}} & 1 & 1\\
1 & 1 & 1 & 1
\end{array}
\right]  . \label{MA}%
\end{equation}

\begin{remark}
\label{Reffsub}Let $A\in\mathcal{PC}_{4}$ be as in (\ref{MA}) and let
$w=\left[
\begin{array}
[c]{cccc}%
w_{1} & w_{2} & w_{3} & w_{4}%
\end{array}
\right]  ^{T}\in\mathcal{V}_{4}.$ According to Corollary \ref{c3por3}, we have:
\end{remark}

\begin{itemize}
\item $w(1)$ is efficient for $A(1)$ if and only if
\[
w_{3}\leq w_{4}\leq w_{2}\leq a_{23}w_{3}\text{ \quad or\quad}w_{3}\geq
w_{4}\geq w_{2}\geq a_{23}w_{3}\text{; }%
\]

\item $w(2)$ is efficient for $A(2)$ if and only if
\[
w_{3}\leq w_{4}\leq w_{1}\leq a_{13}w_{3}\text{ \quad or\quad}w_{3}\geq
w_{4}\geq w_{1}\geq a_{13}w_{3}\text{;}%
\]

\item $w(3)$ is efficient for $A(3)$ if and only if
\[
w_{2}\leq w_{4}\leq w_{1}\leq a_{12}w_{2}\text{ \quad or\quad}w_{2}\geq
w_{4}\geq w_{1}\geq a_{12}w_{2}\text{;}%
\]

\item $w(4)$ is efficient for $A(4)$ if and only if
\[
a_{13}w_{3}\leq w_{1}\leq a_{12}w_{2}\leq a_{23}a_{12}w_{3}\text{ \quad
or\quad}a_{13}w_{3}\geq w_{1}\geq a_{12}w_{2}\geq a_{23}a_{12}w_{3}.
\]

\end{itemize}

Next we give sufficient conditions on the entries of $A\ $for all vectors in
$\mathcal{C}(A)$ to be efficient.

\begin{lemma}
\label{lsuff}Let $A\in\mathcal{PC}_{4}$ be as in (\ref{MA}), with
$a_{12},a_{13},a_{23}$ satisfying (\ref{cond11}). If $a_{13}\leq a_{12}a_{23}%
$, then $\mathcal{C}(A)\subseteq\mathcal{E}(A).$
\end{lemma}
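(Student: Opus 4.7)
I would apply Theorem \ref{blanq} directly and show that for every $w \in \mathcal{C}(A)$ the digraph $G(A,w)$ contains the directed Hamilton cycle $1 \to 4 \to 3 \to 2 \to 1$, which forces strong connectivity. Unwinding the definition of the arcs of $G(A,w)$, this amounts to proving the four inequalities
\[
w_{1} \geq w_{4}, \qquad w_{4} \geq w_{3}, \qquad a_{23}w_{3} \geq w_{2}, \qquad a_{12}w_{2} \geq w_{1}
\]
for every $w = Ay$ with $y = (y_{1},y_{2},y_{3},y_{4})^{T}$ nonnegative and nonzero.

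The key observation is that each of these four inequalities is the nonnegativity of a linear functional of $w$. Hence it suffices to verify them on each column $a^{(1)}, a^{(2)}, a^{(3)}, a^{(4)}$ of $A$ separately, since the nonnegative combination $w = \sum_{i} y_{i}a^{(i)}$ will then inherit each inequality. The fourth column is $\mathbf{e}_{4}$ and satisfies all four trivially; for the other three columns the inequalities reduce to scalar conditions involving only $a_{12}$, $a_{13}$, $a_{23}$. A short computation shows that the entire system collapses to the four conditions $a_{12} \geq 1$, $a_{13} \geq 1$, $a_{23} \geq 1$, and $a_{12}a_{23} \geq a_{13}$ (together with trivial identities such as $1 \geq 1$).

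The main obstacle is the case analysis coming from (\ref{cond11}). I would first observe that case (iv) is incompatible with the hypothesis $a_{13} \leq a_{12}a_{23}$, since case (iv) forces $a_{12}a_{23} < 1 < a_{13}$. Hence only cases (i), (ii), and (iii) can occur, and in all three $a_{12}, a_{13}, a_{23} \geq 1$. Together with the standing hypothesis $a_{13} \leq a_{12}a_{23}$, this gives precisely the four scalar conditions identified above. Therefore every $w \in \mathcal{C}(A)$ yields a strongly connected $G(A,w)$, and Theorem \ref{blanq} delivers $w \in \mathcal{E}(A)$, proving $\mathcal{C}(A) \subseteq \mathcal{E}(A)$.
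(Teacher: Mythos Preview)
Your proof is correct. The four inequalities you isolate, $w_{1}\geq w_{4}$, $w_{4}\geq w_{3}$, $a_{23}w_{3}\geq w_{2}$, $a_{12}w_{2}\geq w_{1}$, are exactly the ones the paper establishes (it writes them as $w_{4}-w_{1}\leq 0$, $w_{3}-w_{4}\leq 0$, $w_{2}-a_{23}w_{3}\leq 0$, $w_{1}-a_{12}w_{2}\leq 0$ and expands $w=Au$ directly rather than checking column by column, but the arithmetic is identical). The observation that case~(iv) of (\ref{cond11}) is excluded by the hypothesis, so that $a_{12},a_{13},a_{23}\geq 1$, is also made in the paper.

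The genuine difference is in the \emph{conclusion} step. Having the four inequalities in hand, you read off the Hamilton cycle $1\to 4\to 3\to 2\to 1$ in $G(A,w)$ and finish in one line via Theorem~\ref{blanq}. The paper instead passes through Remark~\ref{Reffsub} and Theorem~\ref{tnbyn}: it splits into cases according to the signs of $w_{4}-w_{2}$ and $w_{1}-a_{13}w_{3}$, and in each case identifies two distinct indices $i,j$ with $w(i)\in\mathcal{E}(A(i))$ and $w(j)\in\mathcal{E}(A(j))$. Your route is shorter and avoids the extra case analysis; the paper's route is consistent with the machinery it deploys elsewhere (notably in Section~\ref{s6}, where the submatrix criterion of Theorem~\ref{tnbyn} is used repeatedly), but for this particular lemma your direct appeal to strong connectivity is the cleaner argument.
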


\begin{proof}
Suppose that (\ref{cond11}) holds and $a_{13}\leq a_{12}a_{23}.$ Then,
$a_{12},a_{23}\geq1.$ Let $w\in\mathcal{C}(A),$ that is, $w=Au$ for some
nonnegative vector $u=\left[
\begin{array}
[c]{cccc}%
u_{1} & u_{2} & u_{3} & u_{4}%
\end{array}
\right]  ^{T}$ such that $\sum_{i=1}^{4}u_{i}=1.$ Then

\medskip%

\begin{tabular}
[c]{ll}%
$w_{3}-w_{4}=u_{1}\left(  \frac{1}{a_{13}}-1\right)  +u_{2}\left(  \frac
{1}{a_{23}}-1\right)  \leq0;\text{ }$ & \\
& \\
$w_{4}-w_{1}=u_{2}\left(  1-a_{12}\right)  +u_{3}\left(  1-a_{13}\right)
\leq0;$ & \\
& \\
$w_{1}-a_{12}w_{2}=u_{3}\left(  a_{13}-a_{12}a_{23}\right)  +u_{4}\left(
1-a_{12}\right)  \leq0;$ & \\
& \\
$w_{2}-a_{23}w_{3}=u_{1}\left(  \frac{1}{a_{12}}-\frac{a_{23}}{a_{13}}\right)
+u_{4}\left(  1-a_{23}\right)  \leq0.$ &
\end{tabular}

\bigskip

Taking into account Remark \ref{Reffsub}, we may conclude the following.

\begin{itemize}
\item If $w_{4}\leq w_{2}$ then $w(1)$ is efficient for $A(1),$ otherwise
$w(3)$ is efficient for $A(3).$

\item If $w_{1}\leq a_{13}w_{3}$ then $w(2)$ is efficient for $A(2),$
otherwise $w(4)$ is efficient for $A(4).$
\end{itemize}

In any case, there are $i,j\in\{1,2,3,4\},$ with $i\neq j,$ such that
$w(i)\in\mathcal{E}(A(i))$ and $w(j)\in\mathcal{E}(A(j))$. Thus, by Theorem
\ref{tnbyn}, $w$ is efficient for $A.$
\end{proof}

\bigskip

Next we show that, when $A\in\mathcal{PC}_{4}$ is as in (\ref{MA}), with
$a_{12},a_{13},a_{23}$ satisfying (\ref{cond11}), the sufficient conditions in
Lemma \ref{lsuff} for $\mathcal{C}(A)$ to be contained in $\mathcal{E}(A)$ are
also necessary.

\begin{lemma}
\label{lnec}Let $A\in\mathcal{PC}_{4}$ be as in (\ref{MA}), with
$a_{12},a_{13},a_{23}$ satisfying (\ref{cond11})$.$ If $a_{13}>a_{12}a_{23},$
then $\mathcal{C}(A)$ is not contained in $\mathcal{E}(A).$
\end{lemma}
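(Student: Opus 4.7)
The plan is to exhibit, for each case in (\ref{cond11}) that is consistent with the hypothesis, an explicit $w\in\mathcal{C}(A)$ whose digraph $G(A,w)$ fails to be strongly connected, so that $w\notin\mathcal{E}(A)$ by Theorem \ref{blanq}. First I would reduce to cases (i) and (iv) of (\ref{cond11}): in case (ii), $a_{23}=1$ gives $a_{12}a_{23}=a_{12}\geq a_{13}$, and in case (iii), $a_{12}=1$ gives $a_{12}a_{23}=a_{23}\geq a_{13}$, both contradicting the hypothesis. In case (i) the strict bounds $a_{12},a_{23}>1$ combined with $a_{13}>a_{12}a_{23}$ moreover force $a_{13}>1$.

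In case (i), I would take $w=Au$ with $u=[u_1,\,0,\,u_3,\,u_4]^{T}$ for positive $u_1,u_3,u_4$, tuned so that vertex $2$ has no outgoing edge in $G(A,w)$. Expanding $w_1-a_{12}w_2$, $a_{23}w_3-w_2$, and $w_4-w_2$ in these parameters produces three linear expressions (whose coefficients involve $a_{13}-a_{12}a_{23}$, $a_{12}-1$, and $a_{23}-1$), and I would choose $u_1,u_3,u_4$ making all three strictly positive. This exactly removes the edges $2\to 1$, $2\to 3$, $2\to 4$ from $G(A,w)$, leaving vertex $2$ as a dead end, so strong connectedness fails. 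After normalizing $u_1=1$, the three strict inequalities reduce to an interval condition on $u_4$ whose nonemptiness is equivalent to $a_{13}>1$, which is automatic here.

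In case (iv), I would use the symmetric ansatz $w=Au$ with $u=[u_1,u_2,u_3,\,0]^{T}$ and positive $u_1,u_2,u_3$, this time arranging that $w_j>w_4$ strictly for $j=1,2,3$. Since $a_{4j}=1$, vertex $4$ then has no outgoing edges in $G(A,w)$, so the digraph is again not strongly connected and $w$ is inefficient. The three strict inequalities on $u_1,u_2,u_3$ (now involving $1-a_{12}$, $1-a_{23}$, and $a_{13}-1$) can be shown to admit a simultaneous positive solution precisely when $a_{13}>a_{12}a_{23}$, i.e.\ under the standing hypothesis.

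The main obstacle is the feasibility check for the triple-inequality system in each case: one must see that the nested intervals produced by the three inequalities intersect exactly under the stated hypothesis, and correspondingly identify in each case the right vertex to strand (vertex $2$ in case (i), vertex $4$ in case (iv)) and the right coordinate of $u$ to set to zero. Once the ansatz is pinned down, the remaining verification is essentially mechanical linear algebra.
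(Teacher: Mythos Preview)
Your approach is correct and genuinely different from the paper's. Both proofs first reduce to cases (i) and (iv) and then exhibit an explicit inefficient $w=Au\in\mathcal{C}(A)$, but the certificates of inefficiency differ. The paper works with a four-parameter $u$ (with a small $\varepsilon$ perturbation) and invokes Theorem \ref{tnbyn}: it verifies, via Remark \ref{Reffsub}, that at least three of the subvectors $w(i)$ are inefficient for $A(i)$, which forces $w\notin\mathcal{E}(A)$. You instead set one coordinate of $u$ to zero and use the digraph criterion of Theorem \ref{blanq} directly, arranging that a single vertex (vertex $2$ in case (i), vertex $4$ in case (iv)) has no outgoing edge in $G(A,w)$. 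Your route is more elementary---it avoids the recursive machinery of Theorem \ref{tnbyn} and the case analysis in Remark \ref{Reffsub}---while the paper's route stays closer to the tools used elsewhere in the section. The feasibility check you describe does work out: in case (i), after fixing $u_1=1$, the constraint from $w_4>w_2$ caps $u_3$, the constraint from $w_1>a_{12}w_2$ lower-bounds $u_3$ in terms of $u_4$, and compatibility with the constraint from $a_{23}w_3>w_2$ forces $u_4$ into an interval that is nonempty exactly when $a_{13}>1$; case (iv) is analogous, with nonemptiness equivalent to $a_{12}a_{23}<a_{13}$, which is automatic there. Just be sure, when writing this out, to spell out that after choosing $u_4$ (respectively $u_2$) in its interval you still have a nonempty range for $u_3$, since your sketch compresses this two-variable step into a single ``interval condition.''
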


\begin{proof}
If $a_{12},a_{13},a_{23}$ satisfy (\ref{cond11}) and $a_{13}>a_{12}a_{23}$,
then one of the following conditions holds:

\begin{enumerate}
\item[i)] $a_{12},a_{23}>1$ and $a_{13}>a_{12}a_{23},$ or

\item[ii)] $a_{12},a_{23}<1$ and $a_{13}>1.$
\end{enumerate}

In each of the two cases, we give a vector in $\mathcal{C}(A)$ that is not in
$\mathcal{E}(A)$.

\bigskip

\textbf{Case i)} Suppose that $a_{12},a_{23}>1$ and $a_{13}>a_{12}a_{23}.$ Let
$\varepsilon>0$ and let $\ $
\begin{align*}
u_{4}  &  =u_{2}=1;\\
u_{1}  &  =\frac{a_{12}(a_{23}-1)}{a_{13}-a_{23}a_{12}}+\varepsilon;\\
u_{3}  &  =\frac{a_{12}-1}{a_{13}-a_{12}a_{23}}+\frac{a_{12}-1}{2a_{12}%
(a_{23}-1)}\varepsilon.
\end{align*}
Let $w=Au,$ with $u=\left[
\begin{array}
[c]{cccc}%
u_{1} & u_{2} & u_{3} & u_{4}%
\end{array}
\right]  ^{T}$. As $u$ is a nonzero nonnegative vector, $w\in\mathcal{C}(A).$
Since
\[
w_{4}-w_{2}=\frac{a_{12}-1}{2a_{12}}\varepsilon>0,
\]
by Remark \ref{Reffsub} we have that $w(1)$ is inefficient for $A(1);$ since
\[
w_{1}-a_{12}w_{2}=\frac{\left(  a_{13}-a_{12}a_{23}\right)  (a_{12}%
-1)}{2a_{12}(a_{23}-1)}\varepsilon>0,
\]
we have that $w(3)$ is inefficient for $A(3);$ also, for $\varepsilon$
sufficiently small,
\[
w_{2}-a_{23}w_{3}=\frac{(a_{23}-1)(1-a_{13})}{a_{13}}+\frac{a_{13}%
-a_{12}a_{23}}{a_{12}a_{13}}\varepsilon<0,
\]
implying that $w(4)$ is inefficient for $A(4).$ By Theorem \ref{tnbyn}, for
$\varepsilon$ sufficiently small, $w$ is inefficient for $A.$

\bigskip

\textbf{Case ii)} Suppose that $a_{12},a_{23}<1$ and $a_{13}>1.\ $Let
$\varepsilon>0$ and
\begin{align*}
u_{4}  &  =u_{3}=1;\\
u_{1}  &  =\frac{a_{13}(a_{23}-1)}{a_{23}(a_{12}-1)}-\varepsilon;\\
u_{2}  &  =\frac{a_{13}-1}{1-a_{12}}-\frac{a_{23}(a_{13}-1)}{2a_{13}%
(1-a_{23})}\varepsilon.
\end{align*}
Let $w=Au,$ with $u=\left[
\begin{array}
[c]{cccc}%
u_{1} & u_{2} & u_{3} & u_{4}%
\end{array}
\right]  ^{T}$. For $\varepsilon$ sufficiently small, $u$ is a nonzero
nonnegative vector and, thus, $w\in\mathcal{C}(A).$ Moreover,
\[
w_{4}-w_{2}=\frac{\left(  a_{23}-1\right)  \left(  a_{13}-a_{12}a_{23}\right)
}{a_{12}a_{23}}+\frac{1-a_{12}}{a_{12}}\varepsilon<0,
\]
implying, by Remark \ref{Reffsub}, that $w(1)$ is inefficient for $A(1).$ We
also have
\[
w_{3}-w_{4}=\frac{a_{13}-1}{2a_{13}}\varepsilon>0,
\]
implying that $w(2)$ is inefficient for $A(2);$ and
\[
w_{4}-w_{1}=\frac{a_{23}\left(  1-a_{12}\right)  (a_{13}-1)}{2a_{13}%
(a_{23}-1)}\varepsilon<0,
\]
implying that $w(3)$ is inefficient for $A(3).$ By Theorem \ref{tnbyn}, for
$\varepsilon$ sufficiently small, $w$ is inefficient for $A.$
\end{proof}

\bigskip

\bigskip We then present the main results of this section. From Lemmas
\ref{lsuff} and \ref{lnec} we have the characterization of the matrices
$A\in\mathcal{PC}_{4}$ as in (\ref{MA}) for which $\mathcal{C}(A)\subseteq
\mathcal{E}(A).$

\begin{theorem}
\label{tmain4by4}Let $A\in\mathcal{PC}_{4}$ be as in (\ref{MA}), with
$a_{12},a_{13},a_{23}$ satisfying (\ref{cond11}). Then, $\mathcal{C}%
(A)\subseteq\mathcal{E}(A)$ if and only if $a_{13}\leq a_{12}a_{23}.$
\end{theorem}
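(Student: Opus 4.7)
The plan is immediate given the machinery developed just above: the theorem is a direct consequence of Lemmas \ref{lsuff} and \ref{lnec}. Sufficiency ($a_{13}\leq a_{12}a_{23}\Rightarrow\mathcal{C}(A)\subseteq\mathcal{E}(A)$) is exactly Lemma \ref{lsuff}. Necessity is obtained by contraposition: if $a_{12},a_{13},a_{23}$ satisfy (\ref{cond11}) and $a_{13}>a_{12}a_{23}$, then one of sub-cases i) or ii) of Lemma \ref{lnec} applies and produces an explicit vector in $\mathcal{C}(A)\setminus\mathcal{E}(A)$. To close the argument cleanly, I would check that the four alternatives of (\ref{cond11}) partition as expected: cases ii) and iii) of (\ref{cond11}) always satisfy $a_{13}\leq a_{12}a_{23}$ (since one of $a_{12}$, $a_{23}$ is $1$ and $a_{13}$ is at most the other), so the inequality $a_{13}>a_{12}a_{23}$ can occur only in cases i) and iv) of (\ref{cond11}), which match precisely the two cases treated in the proof of Lemma \ref{lnec}.

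Conceptually, the reason the two lemmas suffice is the reduction strategy provided by Theorem \ref{tnbyn}: efficiency of $w\in\mathcal{V}_{4}$ for a $4$-by-$4$ reciprocal matrix is equivalent to efficiency of $w(i)$ and $w(j)$ for two distinct deleted-row submatrices $A(i)$ and $A(j)$. Remark \ref{Reffsub} translates this, via Corollary \ref{c3por3}, into four simple chains of inequalities in the entries $w_{1},w_{2},w_{3},w_{4}$. For $w=Au$ with $u\geq 0$ summing to $1$, the sign of each of the four key differences $w_{3}-w_{4}$, $w_{4}-w_{1}$, $w_{1}-a_{12}w_{2}$, $w_{2}-a_{23}w_{3}$ is a linear expression in $u_{1},\ldots,u_{4}$ whose coefficients are determined by $a_{12},a_{13},a_{23}$; the hypothesis $a_{13}\leq a_{12}a_{23}$ forces these coefficients to be nonpositive, so at least two of the $w(i)$ must be efficient, while $a_{13}>a_{12}a_{23}$ leaves enough freedom to kill all four chains simultaneously by a perturbation argument.

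The only subtlety worth flagging in the write-up is the boundary handling: cases ii) and iii) of (\ref{cond11}) permit equality $a_{23}=1$ or $a_{12}=1$, so one should verify that in these degenerate situations the inequality $a_{13}\leq a_{12}a_{23}$ still holds (equivalently, $a_{13}\leq\max(a_{12},a_{23})$ by (\ref{cond11}) ii)-iii)) and hence Lemma \ref{lsuff} applies without exception. Once this bookkeeping is in place, the proof reduces to a two-line citation of the preceding lemmas with no further computation.
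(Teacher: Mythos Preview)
Your proposal is correct and takes exactly the same approach as the paper, which simply states that the theorem follows from Lemmas \ref{lsuff} and \ref{lnec}. Your additional bookkeeping (that cases ii) and iii) of (\ref{cond11}) automatically satisfy $a_{13}\leq a_{12}a_{23}$, so the violation can only occur in cases i) and iv)) is accurate and makes explicit what the paper leaves implicit in the opening lines of the proof of Lemma \ref{lnec}.
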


\bigskip

Recall that, if $R\in\mathcal{PC}_{n}$ is a $3$-block perturbed consistent
matrix, there is a monomial matrix $S$ such that $SRS^{-1}$ has the form
(\ref{MAtriple}), with $a_{12},a_{13},a_{23}$ satisfying (\ref{cond11}). As a
consequence of Theorem \ref{tmain4by4} and Lemmas \ref{cconv} and
\ref{lemablock}, we obtain the characterization of the $3$-block perturbed
consistent matrices $R$ such that $\mathcal{C}(R)\subseteq\mathcal{E}(R)$.

\begin{theorem}
\label{tmain3block}Let $R\in\mathcal{PC}_{n}$ be a $3$-block perturbed
consistent matrix and let $A$ be a matrix as in (\ref{MAtriple}), with
$a_{12},a_{13},a_{23}$ satisfying (\ref{cond11}), monomial similar to $R$.
Then, $\mathcal{C}(R)\subseteq\mathcal{E}(R)$ if and only if $a_{13}\leq
a_{12}a_{23}.$
\end{theorem}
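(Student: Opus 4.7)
The plan is to reduce this theorem to the already-proved $4$-by-$4$ case (Theorem \ref{tmain4by4}) by a two-step chain of equivalences, using monomial similarity to reach a normal form and then peeling off the trivial rows and columns.

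First I would use Lemma \ref{cconv} to replace $R$ with its monomially similar normal form $A$. Since $R$ is a $3$-block perturbed consistent matrix, there is a monomial matrix $S$ with $SRS^{-1}=A$ as in (\ref{MAtriple}), where the top-left $3$-by-$3$ block has entries $a_{12},a_{13},a_{23}$ satisfying (\ref{cond11}). Lemma \ref{cconv} gives the equivalence
\[
\mathcal{C}(R)\subseteq\mathcal{E}(R)\quad\Longleftrightarrow\quad\mathcal{C}(A)\subseteq\mathcal{E}(A).
\]
This step contributes no difficulty; it just lets us work with the canonical form.

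Next, I would treat the cases $n=4$ and $n\geq 5$ separately. If $n=4$, then $A$ already has the exact form (\ref{MA}), so Theorem \ref{tmain4by4} immediately yields the claimed equivalence with $a_{13}\leq a_{12}a_{23}$. If $n\geq 5$, I apply Lemma \ref{lemablock} with $s=3$: the top-left $3$-by-$3$ block of $A$ is in $\mathcal{PC}_{3}$ and $n>s+1=4$, so
\[
\mathcal{C}(A)\subseteq\mathcal{E}(A)\quad\Longleftrightarrow\quad\mathcal{C}(A[\{1,2,3,4\}])\subseteq\mathcal{E}(A[\{1,2,3,4\}]).
\]
The principal submatrix $A[\{1,2,3,4\}]$ is precisely of the form (\ref{MA}), with the same entries $a_{12},a_{13},a_{23}$ (still satisfying (\ref{cond11})). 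Thus Theorem \ref{tmain4by4} applies to this submatrix and gives the desired characterization $a_{13}\leq a_{12}a_{23}$.

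Chaining the two equivalences yields the theorem in both cases. There is really no obstacle in this argument: the substantive content is already contained in Theorem \ref{tmain4by4} (which relied on the nontrivial Lemmas \ref{lsuff} and \ref{lnec}) and in the reduction Lemma \ref{lemablock}. The only thing to check carefully is that the entries $a_{12},a_{13},a_{23}$ of the $4$-by-$4$ submatrix still satisfy (\ref{cond11}) and that $A[\{1,2,3,4\}]$ really has the shape (\ref{MA}), i.e., a last row and column of ones; both are immediate from the form (\ref{MAtriple}).
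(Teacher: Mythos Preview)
Your proposal is correct and follows exactly the paper's approach: the theorem is stated as a direct consequence of Theorem~\ref{tmain4by4} together with Lemmas~\ref{cconv} and~\ref{lemablock}. Your case split $n=4$ versus $n\geq5$ properly accounts for the hypothesis $n>s+1$ in Lemma~\ref{lemablock}.
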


We then have the following result, whose claim concerning the Perron vector
was noted in \cite{FJ4}. For $n=4,$ a weaker version (not including the
boundary conditions) appeared later in \cite{Ro}, without noticing the connection.

\begin{corollary}
\label{c3blockPerron}Let $R\in\mathcal{PC}_{n}$ be a $3$-block perturbed
consistent matrix and let $A$ be a matrix as in (\ref{MAtriple}), with
$a_{12},a_{13},a_{23}$ satisfying (\ref{cond11}), monomial similar to $R$. If
$a_{13}\leq a_{12}a_{23},$ then the Perron vector and the singular vector of
$R$ are efficient for $R.$
\end{corollary}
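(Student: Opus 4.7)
The plan is to derive this corollary as a direct consequence of Theorem \ref{tmain3block}, combined with the known observation (recalled in the subsection on the convex hull of the columns, citing \cite{FJ7}) that both the Perron vector of $R$ and the Perron vector of $RR^{T}$ lie in $\mathcal{C}(R)$.

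First, under the hypothesis $a_{13}\leq a_{12}a_{23}$, Theorem \ref{tmain3block} yields immediately that $\mathcal{C}(R)\subseteq\mathcal{E}(R)$. So all that remains is to argue that the two distinguished vectors of interest are elements of $\mathcal{C}(R)$. For the Perron vector $v$ of $R$, this is essentially tautological: since $Rv=\rho(R)v$ with $\rho(R)>0$ and $v$ entry-wise positive, we have $v=\rho(R)^{-1}Rv$, which exhibits $v$ as a nonnegative combination of the columns of $R$ (the coefficients being the positive entries of $v/\rho(R)$), hence $v\in\mathcal{C}(R)$.

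For the singular vector, let $u$ be the Perron vector of $RR^{T}$, so $RR^{T}u=\rho(RR^{T})u$ with $u$ positive. Setting $y:=\rho(RR^{T})^{-1}R^{T}u$, we have $y$ positive (since $R^{T}$ is positive and $u$ is positive) and $Ry=u$. This exhibits $u$ as a positive combination of the columns of $R$, so $u\in\mathcal{C}(R)$. (This is precisely the observation invoked in the background section.)

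Combining these facts with the inclusion $\mathcal{C}(R)\subseteq\mathcal{E}(R)$ obtained from Theorem \ref{tmain3block}, both the Perron vector and the singular vector of $R$ belong to $\mathcal{E}(R)$. There is no real obstacle here; the work was done in proving Theorem \ref{tmain3block}, and the corollary is a one-line deduction once membership in $\mathcal{C}(R)$ is noted. The only mild care needed is to make explicit the (standard) argument that the Perron vectors of $R$ and of $RR^{T}$ lie in $\mathcal{C}(R)$, so the reader sees why the hypothesis of Theorem \ref{tmain3block} suffices.
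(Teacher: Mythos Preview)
Your proof is correct and follows exactly the route the paper intends: the corollary is an immediate consequence of Theorem \ref{tmain3block} together with the fact, recalled in the background (citing \cite{FJ7}), that the Perron vector of $R$ and the Perron vector of $RR^{T}$ both lie in $\mathcal{C}(R)$. Your explicit verification of the latter membership (via $v=\rho(R)^{-1}Rv$ and $u=R(\rho(RR^{T})^{-1}R^{T}u)$) simply spells out what the paper takes for granted from \cite{FJ7}.
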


\bigskip

\bigskip We finally note that, if $R$ is a $4$-by-$4$ double perturbed
consistent matrix, then $R$ is monomially similar to a $3$-block perturbed
consistent matrix as in (\ref{MAtriple}), with $a_{12},a_{13},a_{23}$
satisfying one of the conditions ii) or iii) in (\ref{cond11}), or both
$a_{12}=a_{13}\geq1$ and $a_{23}\geq1.$ (The latter case occurs when the two
perturbed entries above the main diagonal lie in different rows and different
columns.) Then, by Theorem \ref{tmain3block}, $\mathcal{C}(R)\subseteq
\mathcal{E}(R).$

The class of $3$-block perturbed consistent matrices of size $n>4$ also
include a certain type of double perturbed consistent matrices. However, since
such matrices are also $4$-block triangular perturbed consistent matrices, we
leave the discussion of that case for the next section.

\section{Efficiency of the convex hull of the columns of a $4$-block
triangular perturbed consistent matrix\label{s6}}

Here we give necessary and sufficient conditions for $\mathcal{C}%
(A)\subseteq\mathcal{E}(A),$ when $A$ is a $4$-block triangular perturbed
consistent matrix. By Lemmas \ref{lsim} and \ref{cconv}, we may assume that
$A$ is as in (\ref{triple}), with $a_{14}\geq1$ (by an additional permutation
similarity, if necessary)$.$ We start by considering the case $A\in
\mathcal{PC}_{5}$:
\begin{equation}
A=\left[
\begin{array}
[c]{ccccc}%
1 & 1 & a_{13} & a_{14} & 1\\
1 & 1 & 1 & a_{24} & 1\\
\frac{1}{a_{13}} & 1 & 1 & 1 & 1\\
\frac{1}{a_{14}} & \frac{1}{a_{24}} & 1 & 1 & 1\\
1 & 1 & 1 & 1 & 1
\end{array}
\right]  .\text{ } \label{MA5}%
\end{equation}

\bigskip

We use the notation $\ $introduced in Section \ref{secnotation}.

\begin{remark}
\label{Reffsub5}Let $A$ be as in (\ref{MA5}) and let $w=\left[
\begin{array}
[c]{ccccc}%
w_{1} & w_{2} & w_{3} & w_{4} & w_{5}%
\end{array}
\right]  ^{T}\in\mathcal{V}_{5}.$ According to Corollary \ref{c3por3} (Theorem
\ref{tmain} is more explicit in some cases), we have:
\end{remark}

\begin{itemize}
\item $w(1,2)$ is efficient for $A(1,2)$ if and only if
\[
w_{3}=w_{4}=w_{5}\text{; }%
\]

\item $w(1,3)$ is efficient for $A(1,3)$ if and only if
\[
w_{4}\leq w_{5}\leq w_{2}\leq a_{24}w_{4}\text{ \quad or\quad}w_{4}\geq
w_{5}\geq w_{2}\geq a_{24}w_{4}\text{;}%
\]

\item $w(1,4)$ is efficient for $A(1,4)$ if and only if
\[
w_{2}=w_{3}=w_{5}\text{;}%
\]

\item $w(1,5)$ is efficient for $A(1,5)$ if and only if
\[
w_{4}\leq w_{3}\leq w_{2}\leq a_{24}w_{4}\text{ \quad or\quad}w_{4}\geq
w_{3}\geq w_{2}\geq a_{24}w_{4};
\]

\item $w(2,3)$ is efficient for $A(2,3)$ if and only if
\[
w_{4}\leq w_{5}\leq w_{1}\leq a_{14}w_{4}\text{ \quad or\quad}w_{4}\geq
w_{5}\geq w_{1}\geq a_{14}w_{4}\text{;}%
\]

\item $w(2,4)$ is efficient for $A(2,4)$ if and only if
\[
w_{3}\leq w_{5}\leq w_{1}\leq a_{13}w_{3}\text{ \quad or\quad}w_{3}\geq
w_{5}\geq w_{1}\geq a_{13}w_{3}\text{;}%
\]

\item $w(2,5)$ is efficient for $A(2,5)$ if and only if
\[
w_{4}\leq w_{3}\leq\frac{w_{1}}{a_{13}}\leq\frac{a_{14}}{a_{13}}w_{4}\text{
\quad or\quad}w_{4}\geq w_{3}\geq\frac{w_{1}}{a_{13}}\geq\frac{a_{14}}{a_{13}%
}w_{4};
\]

\item $w(3,4)$ is efficient for $A(3,4)$ if and only if
\[
w_{1}=w_{2}=w_{5}\text{;}%
\]

\item $w(3,5)$ is efficient for $A(3,5)$ if and only if
\[
a_{24}w_{4}\leq w_{2}\leq w_{1}\leq a_{14}w_{4}\text{ \quad or\quad}%
a_{24}w_{4}\geq w_{2}\geq w_{1}\geq a_{14}w_{4}\text{;}%
\]

\item $w(4,5)$ is efficient for $A(4,5)$ if and only if
\[
w_{3}\leq w_{2}\leq w_{1}\leq a_{13}w_{3}\text{ \quad or\quad}w_{3}\geq
w_{2}\geq w_{1}\geq a_{13}w_{3}.
\]

\end{itemize}

\newpage

\begin{lemma}
\label{l5cainea}Let $A\in\mathcal{PC}_{5}$ be as in (\ref{MA5})$,$ with
$a_{14}\geq1.$ Suppose that neither of the following two conditions holds:

\begin{enumerate}
\item $1<a_{24}<a_{14}<a_{13;}$

\item $a_{24}<1<a_{13}<a_{14.}$
\end{enumerate}

Then, $\mathcal{C}(A)$ is contained in $\mathcal{E}(A).$
\end{lemma}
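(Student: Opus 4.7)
The plan is to apply Theorem~\ref{tnbyn} iteratively. Let $u$ be a nonzero nonnegative column vector of length $5$ and set $w=Au\in\mathcal{C}(A)$. By Theorem~\ref{tnbyn} applied to $A$, it suffices to exhibit distinct $i,j\in\{1,\ldots,5\}$ with $w(i)\in\mathcal{E}(A(i))$ and $w(j)\in\mathcal{E}(A(j))$, and since each $A(i)$ is $4\times 4$, a second application of Theorem~\ref{tnbyn} reduces ``$w(i)\in\mathcal{E}(A(i))$'' to the existence of $p\neq q$ in $\{1,\ldots,5\}\setminus\{i\}$ with $w(i,p)\in\mathcal{E}(A(i,p))$ and $w(i,q)\in\mathcal{E}(A(i,q))$. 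I encode this combinatorially in the graph $H=H(u)$ on vertex set $\{1,\ldots,5\}$ whose edges are the pairs $\{i,j\}$ for which $w(i,j)\in\mathcal{E}(A(i,j))$: the goal becomes producing two vertices of $H$ of degree at least two.

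The first step is to translate the ten conditions of Remark~\ref{Reffsub5} into linear (in)equalities in the components of $u$. The key identities are
\begin{align*}
w_1-w_5 &= u_3(a_{13}-1)+u_4(a_{14}-1), & w_2-w_5 &= u_4(a_{24}-1),\\
w_3-w_5 &= u_1\bigl(\tfrac{1}{a_{13}}-1\bigr), & w_4-w_5 &= u_1\bigl(\tfrac{1}{a_{14}}-1\bigr)+u_2\bigl(\tfrac{1}{a_{24}}-1\bigr),
\end{align*}
together with analogous expressions for $a_{13}w_3-w_1$, $a_{14}w_4-w_1$, $a_{24}w_4-w_2$, and the remaining $w_i-w_j$. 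Four of the seven ``nontrivial'' conditions in Remark~\ref{Reffsub5}, namely those indexed by $\{1,5\}, \{2,5\}, \{3,5\}, \{4,5\}$, share vertex $5$ as an endpoint, so vertex $5$ is the natural first candidate for a degree-two vertex of $H$.

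The second step is a case analysis driven by the signs of $a_{13}-1$, $a_{24}-1$ and the ordering of $a_{13}, a_{14}, a_{24}$. The hypothesis, combined with $a_{14}\geq 1$, excludes precisely configurations (1) and (2); a convenient sub-division of the complement is (a) $a_{13}<1$; (b) $a_{13}\geq 1$, $a_{24}\geq 1$, with $a_{14}\geq a_{13}$ or $a_{14}\leq a_{24}$ (the boundary $a_{24}=1$ is automatically included); (c) $a_{13}\geq 1$, $a_{24}<1$, and $a_{14}\leq a_{13}$. In each sub-case, inspection of the signs of the displayed differences (and of their ``$a_{k\ell}$-twisted'' cousins) forces two specific conditions of Remark~\ref{Reffsub5} to hold, arranged so that two vertices of $H$ --- typically vertex $5$ together with one of $2$, $3$, or $4$ --- have degree at least two, completing the proof via the graph-theoretic reformulation.

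The main obstacle is the bookkeeping: three perturbation parameters and five $u$-coordinates produce a sizeable case analysis, and at boundary configurations the disjunctions in Remark~\ref{Reffsub5} must be handled delicately (for instance, when $a_{13}=1$ one has $w_3=w_5$ identically, so the equality-type conditions indexed by $\{1,2\}, \{1,4\}, \{3,4\}$ can contribute extra edges to $H$). The structural observation that four useful edges meet at vertex $5$ --- a reflection of $A$ being a perturbation supported in the upper-left $4\times 4$ block --- is what makes placing the first degree-two vertex at $5$ essentially routine; the remaining verification reduces to checking a handful of linear inequalities that follow directly from the sub-case hypothesis.
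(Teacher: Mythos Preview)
Your plan is the paper's own strategy: apply Theorem~\ref{tnbyn} twice, use Remark~\ref{Reffsub5} for the $3\times 3$ pieces, and case-split on the parameters. The graph $H$ is a tidy repackaging of exactly what the paper does implicitly, and the observation that vertex $5$ tends to pick up edges easily is genuine and useful.

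Where the sketch is too optimistic is the claim that, once vertex $5$ is handled, ``the remaining verification reduces to checking a handful of linear inequalities that follow directly from the sub-case hypothesis.'' Two issues. First, your three-case subdivision does not cover the complement of conditions 1 and 2: for instance $a_{24}=1<a_{14}<a_{13}$ and $a_{13}=1,\ a_{24}<1,\ a_{14}>1$ fall through your (a)--(c). The paper uses seven parameter cases. Second, and more seriously, in several of those cases the second degree-$\geq 2$ vertex of $H$ cannot be identified from parameter signs alone; one must further sub-case on the relative sizes of the $w_i$, and in two places (the paper's Cases~iii and~v, lying inside your (b) and (a) respectively) the needed inequality among the $w_i$ is \emph{not} a direct sign check. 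There the paper assumes the contrary, extracts three simultaneous linear inequalities on the $u_i$, and derives a contradiction. So the approach is right, but ``essentially routine'' understates what remains: the second vertex sometimes requires a short but non-obvious argument rather than mere inspection.
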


\begin{proof}
Let $w\in\mathcal{C}(A),$ that is, $w=Av$ for some nonnegative vector
$v=\left[
\begin{array}
[c]{ccccc}%
v_{1} & v_{2} & v_{3} & v_{4} & v_{5}%
\end{array}
\right]  ^{T}$ such that $\sum_{i=1}^{5}v_{i}=1$. Our strategy is the
following. We consider each of the following possible cases:

\begin{enumerate}
\item[i)] $1\leq a_{13},a_{24}\leq a_{14};$

\item[ii)] $1\leq a_{14}\leq a_{13},a_{24};$

\item[iii)] $1\leq a_{13}\leq a_{14}\leq a_{24};$

\item[iv)] $a_{13},a_{24}\leq1\leq a_{14};$

\item[v)] $a_{13}\leq1\leq a_{24}\leq a_{14};$

\item[vi)] $a_{13}\leq1\leq a_{14}\leq a_{24};$

\item[vii)] $a_{24}\leq1\leq a_{14}\leq a_{13}.$
\end{enumerate}

In each case i)-vii), we give $i,j,i_{1},i_{2},j_{1},j_{2}\in\{1,\ldots,n\}$
with $i,j$ distinct, $i,i_{1},i_{2}$ pairwise distinct and $j,j_{1},j_{2}$
pairwise distinct$,$ such that $w(i,i_{1})$ and $w(i,i_{2})$ are efficient for
$A(i,i_{1})$ and $A(i,i_{2})$, respectively, and $w(j,j_{1})$ and $w(j,j_{2})$
are efficient for $A(j,j_{1})$ and $A(j,j_{2})$, respectively. Then, by
Theorem \ref{tnbyn}, $w(i)$ is efficient for $A(i)$ and $w(j)$ is efficient
for $A(j).$ Again by the theorem, $w$ is efficient for $A.$ Since $w$ is an
arbitrary vector in $\mathcal{C}(A)$, this proves the claim.

We will use Remark \ref{Reffsub5} to check the efficiency for the
corresponding principal submatrix of $A$ of the vectors obtained from $w$ by
deleting two entries$.$ To facilitate the proof, we state some useful
expressions involving the entries of $w$:%

\begin{tabular}
[c]{ll}%
$w_{1}-w_{2}=v_{3}(a_{13}-1)+v_{4}(a_{14}-a_{24});$ & \\
& \\
$w_{2}-w_{3}=\frac{1}{a_{13}}\left(  v_{1}(a_{13}-1)+v_{4}a_{13}%
(a_{24}-1)\right)  ;$ & \\
& \\
$w_{3}-w_{4}=v_{1}\left(  \frac{1}{a_{13}}-\frac{1}{a_{14}}\right)
+v_{2}\left(  1-\frac{1}{a_{24}}\right)  ;$ & \\
& \\
$w_{4}-w_{5}=v_{1}\left(  \frac{1}{a_{14}}-1\right)  +v_{2}\left(  \frac
{1}{a_{24}}-1\right)  ;$ & \\
& \\
$w_{5}-w_{2}=v_{4}(1-a_{24});$ & \\
& \\
$w_{5}-w_{1}=v_{3}(1-a_{13})+v_{4}(1-a_{14});$ & \\
& \\
$w_{3}-w_{5}=v_{1}(\frac{1}{a_{13}}-1);$ & \\
& \\
$w_{1}-a_{14}w_{4}=\frac{1}{a_{24}}\left(  v_{2}(a_{24}-a_{14})+v_{3}%
a_{24}(a_{13}-a_{14})+v_{5}a_{24}(1-a_{14})\right)  ;$ & \\
& \\
$w_{1}-a_{13}w_{3}=(v_{2}+v_{5})(1-a_{13})+v_{4}(a_{14}-a_{13});$ & \\
& \\
$w_{2}-a_{24}w_{4}=\frac{1}{a_{14}}\left(  v_{1}(a_{14}-a_{24})+(v_{3}%
+v_{5})(1-a_{24})a_{14}\right)  .$ &
\end{tabular}

\bigskip

Now we consider each case (including natural subcases) and make observations
that verify the claim. To facilitate the arguments, in Table \ref{tab2} we
give the signs of the expressions above in each case, when they are uniquely
determined. By $\geq0$ ($\leq0)$ we mean a nonnegative (nonpositive) expression.

\bigskip%

\begin{table}[] \centering
\begin{tabular}
[c]{|c|c|c|c|c|c|c|c|}\hline
Case: & i) & ii) & iii) & iv) & v) & vi) & vii)\\\hline
$w_{1}-w_{2}$ & $\geq0$ &  &  &  &  & $\leq0$ & $\geq0$\\\hline
$w_{2}-w_{3}$ & $\geq0$ & $\geq0$ & $\geq0$ & $\leq0$ &  &  & \\\hline
$w_{3}-w_{4}$ & $\geq0$ &  & $\geq0$ &  & $\geq0$ & $\geq0$ & $\leq0$\\\hline
$w_{4}-w_{5}$ & $\leq0$ & $\leq0$ & $\leq0$ &  & $\leq0$ & $\leq0$ & \\\hline
$w_{5}-w_{2}$ & $\leq0$ & $\leq0$ & $\leq0$ & $\geq0$ & $\leq0$ & $\leq0$ &
$\geq0$\\\hline
$w_{5}-w_{1}$ & $\leq0$ & $\leq0$ & $\leq0$ &  &  &  & $\leq0$\\\hline
$w_{3}-w_{5}$ & $\leq0$ & $\leq0$ & $\leq0$ & $\geq0$ & $\geq0$ & $\geq0$ &
$\leq0$\\\hline
$w_{1}-a_{14}w_{4}$ & $\leq0$ &  &  & $\leq0$ & $\leq0$ &  & \\\hline
$w_{1}-a_{13}w_{3}$ &  & $\leq0$ &  & $\geq0$ & $\geq0$ & $\geq0$ & $\leq
0$\\\hline
$w_{2}-a_{24}w_{4}$ &  & $\leq0$ & $\leq0$ & $\geq0$ &  & $\leq0$ & $\geq
0$\\\hline
\end{tabular}
\caption{Signs of expressions depending on the entries of  $w$ that are relevant in the proof of Lemma \ref{l5cainea}. }\label{tab2}%
\end{table}%

\bigskip

\textbf{Case i)} Suppose that $1\leq a_{13},a_{24}\leq a_{14}.$ By Remark
\ref{Reffsub5}, $w(2,3)$ is efficient for $A(2,3).$

\begin{itemize}
\item Suppose that $w_{2}\leq a_{24}w_{4}.$ Then $w(1,3)$ is efficient for
$A(1,3)$ and $w(1,5)$ is efficient for $A(1,5).$ Thus, $w(1)$ is efficient for
$A(1)$ and $w(3)$ is efficient for $A(3)$.

\item Suppose that $w_{2}\geq a_{24}w_{4}.$ Then $w(3,5)$ is efficient for
$A(3,5).$ If $a_{13}w_{3}\leq w_{1},$ then $w(2,5)$ is efficient for $A(2,5),$
else $w(2,4)$ is efficient for $A(2,4).$ In any case, $w(2)$ is efficient for
$A(2)$ and $w(3)$ is efficient for $A(3).$
\end{itemize}

\bigskip\newpage

\textbf{Case ii)} Suppose that $1\leq a_{14}\leq a_{13},a_{24}.$ By Remark
\ref{Reffsub5}, $w(1,3)$ is efficient for $A(1,3)$ and $w(2,4)$ is efficient
for $A(2,4).$

\begin{itemize}
\item Suppose that $w_{1}\leq a_{14}w_{4}.$ Then $w(2,3)$ is efficient for
$A(2,3).$ Thus, $w(2)$ is efficient for $A(2)$ and $w(3)$ is efficient for
$A(3).\ $

\item Suppose that $w_{1}\geq a_{14}w_{4}.$ If $w_{2}\leq w_{1}$ then $w(4,5)$
is efficient for $A(4,5),$ else $w(3,5)$ is efficient for $A(3,5)$.

\begin{itemize}
\item If $w_{4}\leq w_{3}$, then $w(1,5)$ is efficient for $A(1,5).$ Thus,
$w(1)$ is efficient for $A(1)$ and $w(5)$ is efficient for $A(5).$

\item If $w_{4}\geq w_{3}$, then $w(2,5)$ is efficient for $A(2,5).$ Thus,
$w(2)$ is efficient for $A(2)$ and $w(5)$ is efficient for $A(5).$
\end{itemize}
\end{itemize}

\bigskip

\textbf{Case iii)} Suppose that $1\leq a_{13}\leq a_{14}\leq a_{24}.$ By
Remark \ref{Reffsub5}, $w(1,3)$ is efficient for $A(1,3)$ and $w(1,5)$ is
efficient for $A(1,5).$ Thus, $w(1)$ is efficient for $A(1)$.

\begin{itemize}
\item Suppose that $w_{1}\leq a_{14}w_{4}.$ Then $w(2,3)$ is efficient for
$A(2,3).$ Thus, $w(1)$ is efficient for $A(1)$ and $w(3)$ is efficient for
$A(3).$

\item Suppose that $w_{1}\geq a_{14}w_{4}.$ First we show that either
$w_{2}\geq w_{1}$ or $w_{1}\leq a_{13}w_{3}.$ Suppose that $w_{2}<w_{1}$ and
$w_{1}>a_{13}w_{3}.$ The former implies $a_{13}>1$ and then, from $w_{1}\geq
a_{14}w_{4},$ we get $a_{24}-a_{14}>0.$ We have
\begin{equation}
w_{2}<w_{1}\Leftrightarrow v_{3}>\frac{v_{4}(a_{24}-a_{14})}{a_{13}%
-1},\label{condv3}%
\end{equation}%
\begin{align*}
w_{1} &  \geq a_{14}w_{4}\Leftrightarrow v_{2}\geq\frac{v_{3}a_{24}%
(a_{14}-a_{13})+v_{5}a_{24}(a_{14}-1)}{a_{24}-a_{14}}\\
&  \Rightarrow v_{2}\geq\frac{v_{3}a_{24}(a_{14}-a_{13})}{a_{24}-a_{14}},
\end{align*}
and%
\begin{align*}
w_{1} &  >a_{13}w_{3}\Leftrightarrow v_{2}<\frac{v_{4}(a_{14}-a_{13}%
)+v_{5}(1-a_{13})}{a_{13}-1}\\
&  \Rightarrow v_{2}<\frac{v_{4}(a_{14}-a_{13})}{a_{13}-1}.
\end{align*}
Then,
\[
\frac{v_{3}a_{24}(a_{14}-a_{13})}{a_{24}-a_{14}}<\frac{v_{4}(a_{14}-a_{13}%
)}{a_{13}-1}\Rightarrow v_{3}<\frac{v_{4}(a_{24}-a_{14})}{a_{24}(a_{13}-1)},
\]
contradicting (\ref{condv3}). So, we just need to consider the following subcases.

\begin{itemize}
\item Suppose that $w_{2}\geq w_{1}.$ Then, $w(3,5)$ is efficient for
$A(3,5).$ Thus, $w(1)$ is efficient for $A(1)$ and $w(5)$ is efficient for
$A(5).$

\item Suppose that $w_{2}\leq w_{1}$ and $w_{1}\leq a_{13}w_{3}.$ Then,
$w(4,5)$ is efficient for $A(4,5).$ Thus, $w(1)$ is efficient for $A(1)$ and
$w(5)$ is efficient for $A(5).$
\end{itemize}
\end{itemize}

\bigskip

\textbf{Case iv)} Suppose that $a_{13},a_{24}\leq1\leq a_{14}.$ If $w_{4}\geq
w_{3}$ then $w(1,5)$ is efficient for $A(1,5),$ else $w(2,5)$ is efficient for
$A(2,5).$ If $w_{2}\geq w_{1}$ then $w(4,5)$ is efficient for $A(4,5),$ else
$w(3,5)$ is efficient for $A(3,5).$ Thus, $w(5)$ is efficient for $A(5).$

If $w_{4}\geq w_{5}$ then $w(1,3)$ is efficient for $A(1,3).$ If $w_{5}\geq
w_{1}$ then $w(2,4)$ is efficient for $A(2,4)$. If $w_{4}\leq w_{5}$ and
$w_{5}\leq w_{1}$ then $w(2,3)$ is efficient for $A(2,3)$. We also note that
$w_{3}\geq w_{5}\geq w_{2}.$ So, we consider the following subcases.

\begin{itemize}
\item If $w_{4}\geq w_{5},$ $w_{2}\geq w_{1}$ and $w_{4}\geq w_{3},$ then
$w(1)$ is efficient for $A(1)$ and $w(5)$ is efficient for $A(5).$

\item If $w_{4}\geq w_{5},$ $w_{2}\geq w_{1}$ and $w_{4}\leq w_{3},$ then
$\ w_{5}\geq w_{1},$ as $w_{5}\geq w_{2}.$ Thus, $w(2,4)$ is efficient for
$A(2,4).$ Then, $w(2)$ is efficient for $A(2)$ and $w(5)$ is efficient for
$A(5).$

\item If $w_{4}\geq w_{5}$, $w_{2}\leq w_{1},$ then $w(3)$ is efficient for
$A(3)$ and $w(5)$ is efficient for $A(5).$

\item If $w_{4}\leq w_{5}$, $w_{4}\leq w_{3}$ and $w_{5}\geq w_{1},$ then
$w(2)$ is efficient for $A(2)$ and $w(5)$ is efficient for $A(5).$

\item If $w_{4}\leq w_{5}$, $w_{4}\leq w_{3},$ $w_{5}\leq w_{1}$ and
$w_{2}\leq w_{1},$ then $w(3)$ is efficient for $A(3)$ and $w(5)$ is efficient
for $A(5).$
\end{itemize}

\bigskip

\textbf{Case v)} Suppose that $a_{13}\leq1\leq a_{24}\leq a_{14}.$ By Remark
\ref{Reffsub5}, $w(2,5)$ is efficient for $A(2,5).$ If $w_{5}\leq w_{1}$ then
$w(2,3)$ is efficient for $A(2,3),$ otherwise $w(2,4)$ is efficient for
$A(2,4).$ In any case, $w(2)$ is efficient for $A(2).$

\begin{itemize}
\item Suppose that $w_{2}\leq w_{1}.$ Then $w_{5}\leq w_{1},$ as $w_{5}\leq
w_{2}$, and $w(2,3)$ is efficient for $A(2,3).$ If $w_{2}\leq a_{24}w_{4},$
then $w(1,3)$ is efficient for $A(1,3),$ otherwise $w(3,5)$ is efficient for
$A(3,5).$ In any case, $w(2)$ is efficient for $A(2)$ and $w(3)$ is efficient
for $A(3).$

\item If $w_{2}\geq w_{1}$ and $w_{3}\geq w_{2}$, then $w(4,5)$ is efficient
for $A(4,5).$ Thus, $w(2)$ is efficient for $A(2)$ and $w(5)$ is efficient for
$A(5).$

\item Suppose that $w_{2}\geq w_{1}$ and $w_{3}\leq w_{2}.$ We show next that
$w_{2}\leq a_{24}w_{4}.$ In this case, $w(1,3)$ is efficient for $A(1,3)\ $and
$w(1,5)$ is efficient for $A(1,5).$ Then $w(1)$ is efficient for $A(1)$ and
$w(2)$ is efficient for $A(2).$ Now suppose that $w_{2}>a_{24}w_{4},$ in order
to get a contradiction. Then, $a_{14}-a_{24}>0$ and $a_{13}<1.$ Thus,%
\begin{align*}
w_{2}  &  >a_{24}w_{4}\Leftrightarrow v_{1}>\frac{(v_{3}+v_{5})(a_{24}%
-1)a_{14}}{a_{14}-a_{24}}\\
w_{2}  &  \geq w_{1}\Leftrightarrow v_{3}\geq\frac{v_{4}(a_{14}-a_{24}%
)}{1-a_{13}}\\
w_{3}  &  \leq w_{2}\Leftrightarrow v_{1}\leq\frac{v_{4}a_{13}(a_{24}%
-1)}{1-a_{13}}.
\end{align*}
This implies
\begin{align*}
\frac{(v_{3}+v_{5})(a_{24}-1)a_{14}}{a_{14}-a_{24}}  &  <\frac{v_{4}%
a_{13}(a_{24}-1)}{1-a_{13}}\Rightarrow v_{3}+v_{5}<\frac{v_{4}a_{13}%
(a_{14}-a_{24})}{a_{14}(1-a_{13})}\\
&  \Rightarrow v_{3}<\frac{v_{4}a_{13}(a_{14}-a_{24})}{a_{14}(1-a_{13})}.
\end{align*}
Then%
\[
\frac{v_{4}(a_{14}-a_{24})}{1-a_{13}}<\frac{v_{4}a_{13}(a_{14}-a_{24})}%
{a_{14}(1-a_{13})}\Leftrightarrow a_{14}<a_{13},
\]
a contradiction.
\end{itemize}

\bigskip

\textbf{Case vi)} Suppose that $a_{13}\leq1\leq a_{14}\leq a_{24}.$ We have
that $w(1,3)$ is efficient for $A(1,3).$

\begin{itemize}
\item Suppose that $w_{1}\leq a_{14}w_{4}.$ We have that $w(2,5)$ is efficient
for $A(2,5).$ If $w_{5}\leq w_{1}$ then $w(2,3)$ is efficient for $A(2,3),$
otherwise $w(2,4)$ is efficient for $A(2,4).$ If $w_{3}\leq w_{2}$ then
$w(1,5)$ is efficient for $A(1,5),$ otherwise $w(4,5)$ is efficient for
$A(4,5).$ In all situations, $w(2)$ is efficient for $A(2)$ and $w(5)$ is
efficient for $A(5).$

\item Suppose that $w_{1}\geq a_{14}w_{4}.$ We have that $w(3,5)$ is efficient
for $A(3,5).$ If $w_{3}\leq w_{2}$ then $w(1,5)$ is efficient for $A(1,5),$
otherwise $w(4,5)$ is efficient for $A(4,5).$ In all cases, $w(3)$ is
efficient for $A(3)$ and $w(5)$ is efficient for $A(5).$
\end{itemize}

\bigskip

\textbf{Case vii)} Suppose that $a_{24}\leq1\leq a_{14}\leq a_{13}.$ We have
that $w(2,4)$ is efficient for $A(2,4).$

\begin{itemize}
\item Suppose that $w_{1}\leq a_{14}w_{4}.$ We have that $w(3,5)$ is efficient
for $A(3,5).$

\begin{itemize}
\item If $w_{4}\leq w_{5}$ then $w(2,3)$ is efficient for $A(2,3)$.\ Thus,
$w(2)$ is efficient for $A(2)$ and $w(3)$ is efficient for $A(3).$

\item If $w_{4}\geq w_{5}$ then $w(1,3)$ is efficient for $A(1,3).$ If
$w_{3}\leq w_{2}$ then $w(4,5)$ is efficient for $A(4,5),$ otherwise $w(1,5)$
is efficient for $A(1,5).$ In all cases, $w(3)$ is efficient for $A(3)$ and
$w(5)$ is efficient for $A(5).$
\end{itemize}

\item Suppose that $w_{1}\geq a_{14}w_{4}.$ We have that $w(2,5)$ is efficient
for $A(2,5).$ If $w_{3}\leq w_{2}$ then $w(4,5)$ is efficient for $A(4,5),$
otherwise $w(1,5)$ is efficient for $A(1,5).$ In both cases, $w(2)$ is
efficient for $A(2)$ and $w(5)$ is efficient for $A(5).$
\end{itemize}
\end{proof}

Next we show that, when $A\in\mathcal{PC}_{5}$ is as in (\ref{MA5}), with
$a_{14}\geq1,$ the sufficient conditions in Lemma \ref{l5cainea} for
$\mathcal{C}(A)$ to be contained in $\mathcal{E}(A)$ are also necessary.

\begin{lemma}
\label{l5caineaconv}Let $A\in\mathcal{PC}_{5}$ be as in (\ref{MA5})$.$ Suppose
that one of the following conditions holds:

\begin{enumerate}
\item $1<a_{24}<a_{14}<a_{13},$ or

\item $a_{24}<1<a_{13}<a_{14}.$
\end{enumerate}

Then, $\mathcal{C}(A)$ is not contained in $\mathcal{E}(A).$
\end{lemma}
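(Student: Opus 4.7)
The plan is to mirror the approach of Lemma \ref{lnec}: in each of the two cases, I would exhibit an explicit one-parameter family of nonnegative vectors $v = v(\varepsilon)$ with $\varepsilon > 0$ small, and set $w = A v(\varepsilon) \in \mathcal{C}(A)$. For $\varepsilon$ sufficiently small I would then verify $w \notin \mathcal{E}(A)$, producing the desired witness for $\mathcal{C}(A) \not\subseteq \mathcal{E}(A)$.

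Inefficiency of $w$ will be verified by recursive application of Theorem \ref{tnbyn}. At the $5\times 5$ level it suffices to exhibit four distinct indices $i \in \{1,\ldots,5\}$ for which $w(i) \notin \mathcal{E}(A(i))$, since then no pair of indices can fulfill the hypothesis of the theorem. For each such $i$, I would apply Theorem \ref{tnbyn} again to the $4\times 4$ matrix $A(i)$: it is enough to show that at most one of the four vectors $w(i,j)$, $j \neq i$, is efficient for $A(i,j)$. The efficiency of each such $w(i,j)$ for the $3\times 3$ matrix $A(i,j)$ is equivalent to one of the two explicit chains of inequalities listed in Remark \ref{Reffsub5}, so this last step reduces to checking signs of a small, fixed list of linear expressions in the $w_i$.

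For Case~1 ($1 < a_{24} < a_{14} < a_{13}$), I would pick $v(\varepsilon)$ by analogy with the construction in Lemma \ref{lnec}: take the leading term to solve a small linear system forcing equalities in a carefully chosen set of the Remark \ref{Reffsub5} inequalities, and use the $\varepsilon$-correction to tilt several of the remaining strict inequalities the wrong way. The sign computations use the formulas for $w_i - w_j$ and $w_i - a_{k\ell} w_\ell$ already assembled at the beginning of the proof of Lemma \ref{l5cainea}; under $1 < a_{24} < a_{14} < a_{13}$ most of those expressions have signs determined by the hypothesis alone. Case~2 ($a_{24} < 1 < a_{13} < a_{14}$) is entirely parallel, with an analogous construction in which the roles of the ``increasing'' and ``decreasing'' chains of Remark \ref{Reffsub5} are swapped.

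The main obstacle is the ansatz itself: identifying the right support of $v(\varepsilon)$ and the right leading coefficients so that, for four distinct values of $i$, at least three of the vectors $w(i,j)$ are strictly inefficient for $A(i,j)$. A natural heuristic is to choose the support so as to reproduce, inside the $5\times 5$ problem, a configuration akin to the bad $4\times 4$ configuration of Lemma \ref{lnec}. Once the ansatz is fixed, the remaining work is routine sign bookkeeping, driven entirely by the hypothesized ordering of $a_{13}$, $a_{14}$, $a_{24}$; as a sanity check, the two cases excluded by Lemma \ref{l5cainea} should match exactly the two cases handled here, so the ansatz should be discoverable by reverse engineering which subcases of the proof of Lemma \ref{l5cainea} could not be closed.
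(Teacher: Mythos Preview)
Your plan is correct and matches the paper's approach: in each case the paper writes down an explicit nonnegative $u$, sets $w=Au$, and then uses Remark~\ref{Reffsub5} to show that the only pairs $(i,j)$ with $w(i,j)\in\mathcal{E}(A(i,j))$ all contain one fixed index, so that Theorem~\ref{tnbyn} forces $w\notin\mathcal{E}(A)$. The one practical deviation is that the paper uses \emph{two} perturbation parameters $\varepsilon_1,\varepsilon_2$ (with $\varepsilon_1$ chosen small relative to $\varepsilon_2$, which is itself small) rather than a single $\varepsilon$; when you execute your ansatz you will likely find that this extra degree of freedom is needed to tilt all the required inequalities simultaneously.
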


\begin{proof}
In each of the two cases, we give a vector in $\mathcal{C}(A)$ that is not in
$\mathcal{E}(A)$.

\bigskip

\textbf{Case i)} Suppose that $1<a_{24}<a_{14}<a_{13}.$ Let $\varepsilon
_{1},\varepsilon_{2}>0$ and let
\begin{align*}
u_{1}  &  =u_{4}=1;\text{ }u_{5}=0;\\
u_{2}  &  =\frac{a_{24}(a_{13}-a_{14})}{a_{13}a_{14}(a_{24}-1)}+\varepsilon
_{1};\\
u_{3}  &  =\frac{a_{14}-a_{24}}{a_{13}a_{14}(a_{24}-1)}+\varepsilon_{2}.
\end{align*}
Let $w=Au,$ with $u=\left[
\begin{array}
[c]{ccccc}%
u_{1} & u_{2} & u_{3} & u_{4} & u_{5}%
\end{array}
\right]  ^{T}$. Since $u$ is a nonzero nonnegative vector, $w\in
\mathcal{C}(A).$ We have
\[
w_{3}-w_{4}=\frac{a_{24}-1}{a_{24}}\varepsilon_{1}>0.
\]
Also, for $\varepsilon_{2}$ sufficiently small, we have
\[
w_{2}-a_{24}w_{4}=\frac{\left(  a_{13}-1\right)  \left(  a_{14}-a_{24}\right)
}{a_{13}a_{14}}+\left(  1-a_{24}\right)  \varepsilon_{2}>0.
\]
In addition, for $\varepsilon_{1}$ sufficiently small, we have
\[
w_{1}-a_{14}w_{4}=\frac{a_{24}-a_{14}}{a_{24}}\varepsilon_{1}+(a_{13}%
-a_{14})\varepsilon_{2}>0.
\]
Since%
\[
w_{1}-w_{3}=\left(  a_{14}-1\right)  \left(  1+\frac{a_{24}(a_{13}-1)}%
{a_{13}a_{14}\left(  a_{24}-1\right)  }\right)  +\left(  a_{13}-1\right)
\varepsilon_{2}>0,
\]
$w_{1}$ and $w_{3}$ are distinct. By Remark \ref{Reffsub5}, if $w(i,j)$ is
efficient for $A(i,j),$ $i<j,$ then $(i,j)\in\{(1,4),(2,4),(3,4),(4,5)\}.$
Thus, by Theorem \ref{tnbyn}, for $\varepsilon_{1},\varepsilon_{2}$
sufficiently small, $w$ is inefficient for $A.$

\bigskip

\textbf{Case ii)} Suppose that $a_{24}<1<a_{13}<a_{14}.$ Let $\varepsilon
_{1},\varepsilon_{2}>0$ and let
\begin{align*}
u_{3}  &  =u_{4}=1;\text{ }u_{5}=0;\\
u_{1}  &  =\frac{a_{13}\left(  1-a_{24}\right)  }{a_{13}-1}+\varepsilon_{1};\\
u_{2}  &  =\frac{a_{24}(a_{14}-a_{13})}{a_{14}(a_{13}-1)}+\varepsilon_{2}.
\end{align*}
Let $w=Au,$ with $u=\left[
\begin{array}
[c]{ccccc}%
u_{1} & u_{2} & u_{3} & u_{4} & u_{5}%
\end{array}
\right]  ^{T}$. Since $u$ is a nonzero nonnegative vector, $w\in
\mathcal{C}(A).$ We have
\[
w_{2}-w_{3}=\frac{a_{13}-1}{a_{13}}\varepsilon_{1}>0.
\]
For $\varepsilon_{2}$ sufficiently small, we have%
\[
w_{1}-a_{13}w_{3}=\frac{(a_{14}-a_{13})(a_{14}-a_{24})}{a_{14}}+(1-a_{13}%
)\varepsilon_{2}>0,
\]
and, in addition, for $\varepsilon_{1}$ sufficiently small,
\[
w_{4}-w_{3}=\frac{a_{13}-a_{14}}{a_{13}a_{14}}\varepsilon_{1}+\frac{1-a_{24}%
}{a_{24}}\varepsilon_{2}>0.
\]
Since
\[
w_{5}-w_{3}=1-a_{24}+\frac{a_{13}-1}{a_{13}}\varepsilon_{1}>0,
\]
$w_{3}$ and $w_{5}$ are distinct. By Remark \ref{Reffsub5}, if $w(i,j)$ is
efficient for $A(i,j),$ $i<j,$ then $(i,j)\in\{(1,3),(2,3),(3,4),(3,5)\}.$
Thus, by Theorem \ref{tnbyn}, for $\varepsilon_{1},\varepsilon_{2}$
sufficiently small, $w$ is inefficient for $A.$
\end{proof}

\bigskip

We then present the main results of this section. From Lemmas \ref{l5cainea}
and \ref{l5caineaconv} we have the characterization of the matrices
$A\in\mathcal{PC}_{5}$ as in (\ref{MA5}), with $a_{14}\geq1,$ for which
$\mathcal{C}(A)\subseteq\mathcal{E}(A).$

\begin{theorem}
\label{tmain5by5}Let $A\in\mathcal{PC}_{5}$ be as in (\ref{MA5}) with
$a_{14}\geq1$. Then, $\mathcal{C}(A)\subseteq\mathcal{E}(A)$ if and only if
neither of the conditions 1. or 2. in Lemma \ref{l5cainea} holds.
\end{theorem}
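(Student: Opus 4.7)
The plan is to assemble the theorem directly from the two preceding lemmas, which together cover the two implications of the biconditional. The sufficiency (the "if" direction) is exactly the conclusion of Lemma \ref{l5cainea}: if neither of the forbidden chains $1<a_{24}<a_{14}<a_{13}$ nor $a_{24}<1<a_{13}<a_{14}$ holds, then every vector $w\in\mathcal{C}(A)$ is efficient for $A$. The necessity (the "only if" direction) is the contrapositive of Lemma \ref{l5caineaconv}: if one of the two chains does hold, then that lemma has already produced an explicit vector $w=Au\in\mathcal{C}(A)$ that fails to be efficient for $A$, so $\mathcal{C}(A)\not\subseteq\mathcal{E}(A)$.

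Consequently, no new argument is needed beyond citation and bookkeeping. I would simply state both implications, invoke Lemmas \ref{l5cainea} and \ref{l5caineaconv} in turn, and note that together they yield the claimed characterization for matrices of the form (\ref{MA5}) with $a_{14}\geq1$. There is no real obstacle here, since the substantive work has already been carried out: the exhaustive case analysis i)--vii) of Lemma \ref{l5cainea} verifies the hypothesis of Theorem \ref{tnbyn} for an arbitrary convex combination of columns, while Lemma \ref{l5caineaconv} supplies, in each of the two remaining cases, a one-parameter family of nonnegative combinations whose image under $A$ is shown (via Remark \ref{Reffsub5} and Theorem \ref{tnbyn}) to be inefficient.
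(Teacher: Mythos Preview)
Your proposal is correct and matches the paper's own treatment exactly: the theorem is stated as an immediate consequence of Lemmas \ref{l5cainea} and \ref{l5caineaconv}, with no additional argument beyond combining the two implications.
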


\bigskip

Recall that, if $R\in\mathcal{PC}_{n}$ is a $4$-block triangular perturbed
consistent matrix, there is a monomial matrix $S$ such that $SRS^{-1}$ has the
form (\ref{triple}), with $a_{14}\geq1.$ As a consequence of Theorem
\ref{tmain5by5} and Lemmas \ref{cconv} and \ref{lemablock}, we obtain the
characterization of the $4$-block triangular perturbed consistent matrices $R$
such that $\mathcal{C}(R)\subseteq\mathcal{E}(R)$.

\begin{theorem}
\label{thtriangular}Let $R\in\mathcal{PC}_{n}$ be a $4$-block triangular
perturbed consistent matrix and let $A$ be a matrix as in (\ref{triple}), with
$a_{14}\geq1$, monomially similar to $R$. Then, $\mathcal{C}(R)\subseteq
\mathcal{E}(R)$ if and only if neither of conditions 1. or 2. in Lemma
\ref{l5cainea} holds$.$
\end{theorem}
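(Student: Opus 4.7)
The plan is to reduce Theorem \ref{thtriangular} to Theorem \ref{tmain5by5} by chaining the two invariance lemmas already available. Since the property $\mathcal{C}(\cdot)\subseteq\mathcal{E}(\cdot)$ is preserved as a biconditional under both monomial similarity (Lemma \ref{cconv}) and deleting a redundant row/column equal to the ones block (Lemma \ref{lemablock}), and since the entries $a_{13}$, $a_{14}$, $a_{24}$ defining conditions 1.\ and 2.\ are unchanged by either operation, the reduction is essentially mechanical.

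First I would use the definition of a $4$-block triangular perturbed consistent matrix: by Lemma \ref{cconv}, $\mathcal{C}(R)\subseteq\mathcal{E}(R)$ holds if and only if it holds for any monomially similar matrix, so we may assume at the outset that $R=A$ has the form (\ref{triple}). If the given $a_{14}<1$, a further permutation similarity (swapping $1\leftrightarrow 4$ and $2\leftrightarrow 3$) produces a matrix of the same shape (\ref{triple}) in which the role of $a_{14}$ is played by $1/a_{14}>1$, with $a_{13}$ and $a_{24}$ exchanged and inverted; this step has to be carried out carefully enough to see that conditions 1.\ and 2.\ in Lemma \ref{l5cainea} are invariant under this swap (they are: condition 1.\ becomes condition 2.\ and vice versa under inversion of the three parameters), so the statement of the theorem is well-posed and unchanged by the normalization $a_{14}\geq 1$.

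Next I would handle the size. If $n=5$, then $A$ is exactly of the form (\ref{MA5}) with $a_{14}\geq 1$, and the conclusion is Theorem \ref{tmain5by5}. If $n\geq 6$, apply Lemma \ref{lemablock} with $s=4$ and $B$ the $4\times 4$ perturbed block of (\ref{triple}); since $n>s+1$, the lemma gives
\[
\mathcal{C}(A)\subseteq\mathcal{E}(A)\ \Longleftrightarrow\ \mathcal{C}(A[\{1,\dots,5\}])\subseteq\mathcal{E}(A[\{1,\dots,5\}]).
\]
The submatrix $A[\{1,\dots,5\}]$ is of form (\ref{MA5}) with the same $a_{13},a_{14},a_{24}$ as $A$, so Theorem \ref{tmain5by5} closes the argument in both directions at once.

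Since the real work is already contained in Lemmas \ref{l5cainea} and \ref{l5caineaconv}, there is no substantial obstacle in this proof; the only point needing attention is the normalization in the first paragraph, namely verifying that the symmetry $a_{14}\leftrightarrow 1/a_{14}$ performed through the permutation similarity described above maps the forbidden region $\{1<a_{24}<a_{14}<a_{13}\}\cup\{a_{24}<1<a_{13}<a_{14}\}$ to itself. Once that invariance is checked, the reduction to the $5\times 5$ case is unconditional and the theorem follows.
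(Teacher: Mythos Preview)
Your proposal is correct and follows the same route as the paper: invoke Lemma~\ref{cconv} to pass from $R$ to the normalized form $A$ in (\ref{triple}), then Lemma~\ref{lemablock} (with $s=4$) to reduce to the $5\times 5$ principal submatrix, and finally Theorem~\ref{tmain5by5}. The only superfluous part is your second-paragraph discussion of the case $a_{14}<1$ and the invariance of the forbidden region under the swap $1\leftrightarrow 4$, $2\leftrightarrow 3$: the hypothesis of the theorem already hands you a specific $A$ with $a_{14}\geq 1$, so no further normalization or well-posedness check is required.
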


If $R$ is a double perturbed consistent matrix of size $n\geq5$, then $R$ is
$\ $a $4$-block triangular perturbed consistent matrix. More precisely, $R$ is
monomially similar to a matrix of the form (\ref{triple}), with $a_{14}\geq1$
and one of $a_{13},$ $a_{14}$ or $a_{24}$ equal to $1.$ All double perturbed
consistent matrices of size $4$ are $3$-block perturbed consistent matrices$.$
Then, by Theorem \ref{thtriangular} and the comments at the end of Section
\ref{s4}, we have the following (when $n=3$ the matrix is a simple perturbed
consistent matrix and the result is covered by Proposition \ref{prop3}).

\begin{corollary}
If $R\in\mathcal{PC}_{n}$ is a double perturbed consistent matrix, then
$\mathcal{C}(R)\subseteq\mathcal{E}(R).$
\end{corollary}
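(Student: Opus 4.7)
The plan is a direct case split by the size $n$, using the structural observations already recorded in the paper to reduce the corollary to the three characterization theorems (Proposition \ref{prop3}, Theorem \ref{tmain3block}, Theorem \ref{thtriangular}).

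First I would dispose of the trivial cases. If $n=2$, every $2$-by-$2$ reciprocal matrix is consistent, so the claim is vacuous (and in any case Proposition \ref{prop3} handles $n=3$ as remarked immediately before the corollary, since a double perturbed matrix in $\mathcal{PC}_{3}$ is just a simple perturbed consistent matrix).

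Next, for $n=4$, I would invoke the observation made at the end of Section \ref{s4}: a $4$-by-$4$ double perturbed consistent matrix is monomially similar to a matrix $A$ of the form (\ref{MAtriple}) whose parameters $(a_{12},a_{13},a_{23})$ satisfy condition ii) or iii) of (\ref{cond11}), or the additional case $a_{12}=a_{13}\geq 1$ and $a_{23}\geq 1$. In each of these three subcases one checks at once that $a_{13}\leq a_{12}a_{23}$: in ii) one has $a_{23}=1$ and $a_{13}\leq a_{12}=a_{12}a_{23}$; in iii) one has $a_{12}=1$ and $a_{13}\leq a_{23}=a_{12}a_{23}$; in the last case $a_{13}=a_{12}\leq a_{12}a_{23}$ since $a_{23}\geq 1$. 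Theorem \ref{tmain3block} then gives $\mathcal{C}(R)\subseteq\mathcal{E}(R)$.

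For $n\geq 5$, I would use the statement (already given in the paragraph preceding the corollary) that $R$ is monomially similar to a matrix of the form (\ref{triple}) with $a_{14}\geq 1$ and with at least one of $a_{13},a_{14},a_{24}$ equal to $1$. I then apply Theorem \ref{thtriangular}: it suffices to verify that neither condition 1 nor condition 2 of Lemma \ref{l5cainea} can occur. Both conditions require strict inequalities $1<a_{24}<a_{14}<a_{13}$ or $a_{24}<1<a_{13}<a_{14}$, so as soon as one of $a_{13},a_{14},a_{24}$ equals $1$ an immediate contradiction appears in each: $a_{13}=1$ contradicts $a_{14}<a_{13}$ in 1 and $a_{13}>1$ in 2; $a_{14}=1$ contradicts $a_{24}<a_{14}$ in 1 and $a_{13}<a_{14}$ in 2; $a_{24}=1$ contradicts $1<a_{24}$ in 1 and $a_{24}<1$ in 2.

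There is essentially no obstacle here beyond the bookkeeping above; all the substantial work is packaged in Theorems \ref{tmain3block} and \ref{thtriangular}. The only thing to be slightly careful about is remembering that a double perturbation in $\mathcal{PC}_{n}$ with $n\geq 5$ genuinely places the perturbed entries inside a $4$-by-$4$ principal submatrix (since only two pairs of reciprocal entries are modified), so the reduction to the $4$-block triangular form is legitimate and the "one of the three parameters equals $1$" feature is exactly what encodes the fact that only two, rather than three, pairs have been modified.
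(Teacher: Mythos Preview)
Your proof is correct and follows essentially the same route as the paper: the case $n=3$ via Proposition \ref{prop3}, the case $n=4$ via the observation at the end of Section \ref{s4} together with Theorem \ref{tmain3block}, and the case $n\geq 5$ via Theorem \ref{thtriangular} using that one of $a_{13},a_{14},a_{24}$ equals $1$. You have simply written out explicitly the easy verifications (that $a_{13}\leq a_{12}a_{23}$ in the $n=4$ subcases, and that conditions 1 and 2 of Lemma \ref{l5cainea} fail when one parameter equals $1$) that the paper leaves implicit.
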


\bigskip

We also have the following consequence of Theorem \ref{thtriangular}.

\begin{corollary}
\label{corPerronsimple}Let $R\in\mathcal{PC}_{n}$ be a $4$-block triangular
perturbed consistent matrix and let $A$ be a matrix as in (\ref{triple}), with
$a_{14}\geq1$, monomially similar to $R$. If neither of conditions 1. or 2. in
Lemma \ref{l5cainea} holds$,$ then the Perron vector and the singular vector
of $R$ are efficient for $R.$
\end{corollary}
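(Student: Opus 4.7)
The plan is that this corollary is an almost immediate consequence of Theorem \ref{thtriangular} combined with the facts recalled in Section~2.3 about where the Perron and singular vectors of a reciprocal matrix live.

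First I would invoke Theorem \ref{thtriangular} directly. By hypothesis, $R \in \mathcal{PC}_n$ is a $4$-block triangular perturbed consistent matrix, and $A$ as in (\ref{triple}) with $a_{14}\geq 1$ is a monomial similarity of $R$ for which neither of conditions 1.\ or 2.\ in Lemma \ref{l5cainea} holds. Theorem \ref{thtriangular} then yields $\mathcal{C}(R) \subseteq \mathcal{E}(R)$.

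Next I would recall, from Section~2.3, that both the (right) Perron vector of $R$ and the Perron vector of $RR^{T}$ (the singular vector of $R$) belong to the cone $\mathcal{C}(R)$ generated by the columns of $R$; this was noted using the reference \cite{FJ7}. Combining this containment with the conclusion of the previous step gives that each of these two vectors lies in $\mathcal{E}(R)$, which is exactly the claim.

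There is no real obstacle here: the entire work has already been done inside Theorem~\ref{thtriangular} and in the identification, done earlier in the paper, of the Perron and singular vectors as elements of $\mathcal{C}(R)$. The corollary is essentially a packaging statement, highlighting two distinguished weight vectors for which the general theorem guarantees efficiency under the same hypotheses used to characterize $\mathcal{C}(R)\subseteq\mathcal{E}(R)$.
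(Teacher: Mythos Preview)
Your proposal is correct and matches the paper's own reasoning: the corollary is stated there as an immediate consequence of Theorem~\ref{thtriangular}, together with the observation from Section~2.3 that the Perron vector and the singular vector of $R$ lie in $\mathcal{C}(R)$.
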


Sufficient conditions for the efficiency of the Perron vector of a $4$-block
triangular perturbed consistent matrix were first obtained in \cite{FerFur}.
In \cite{Ro} other conditions were obtained in a similar way. In fact, these
known conditions are the sufficient conditions provided in Corollary
\ref{corPerronsimple}, except that they do not include the boundary matrices
given in our result. In particular, they do not cover some cases of double
perturbed consistent matrices. Our conditions for the efficiency of the Perron
vector are necessary and sufficient for the efficiency of all the cone
generated by the columns.

\bigskip

\section{Numerical experiments\label{s7}}

Here, we consider reciprocal matrices $A\in\mathcal{PC}_{n}$\ obtained from a
consistent matrix by modifying $3$ pairs of reciprocal entries located in a
$4$-by-$4$ principal submatrix, as in Sections \ref{s4} and \ref{s6} (and for
which the convex hull of the columns is contained in $\mathcal{E}(A)$)$.$ For
such matrices, we compare the behavior of convex combinations of the columns
and weighted geometric means of the columns.

We generate $\ $random vectors $\alpha=(\alpha_{1},\ldots,\alpha_{n})$ with
nonnegative entries summing to $1.$ Letting $a_{i}$ be the $i$th column of
$A,$ we consider the convex combinations of the columns of $A$
\[
\alpha_{1}a_{1}+\cdots+\alpha_{n}a_{n},
\]
as well as the weighted geometric means of the columns
\[
a_{1}^{(\alpha_{1})}\circ\cdots\circ a_{n}^{(\alpha_{n})}.
\]
For each vector $w$, we take $\left\Vert M(A,w)\right\Vert _{2},$ the
Frobenius norm of $M(A,w)=ww^{(-T)}-A,$ $\ $ as a measure of effectiveness of
$w\in\mathcal{E}(A).$ Here, by $w^{(-T)}$ we denote the transpose of the
entry-wise inverse of $w.$ Recall that, for an $n$-by-$n$ real matrix
$B=[b_{ij}],$ we have%
\[
\left\Vert B\right\Vert _{2}=\left(  \sum_{i,j=1,\ldots,n}(b_{ij})^{2}\right)
^{\frac{1}{2}}.
\]

\bigskip

We will see that, with a similarity on the reciprocal matrix, by a positive
diagonal matrix, the relative performance of the convex combinations of the
columns and of the weighted geometric means of the columns may change. These
different behaviors emphasize the importance of having a large class of
efficient vectors from which a good weight vector can be chosen, for the
particular reciprocal matrix obtained in a practical problem.

\bigskip

As a second part of our experiments, we consider matrices of the same type as
before but for which the convex hull of the columns is not contained in
$\mathcal{E}(A)$. We generate random convex combinations of the columns and
see how often they are inefficient.

\bigskip

We present data for a $3$-block perturbed consistent matrix, as in Section
\ref{s4}, since the data for  $4$-block triangular perturbed consistent matrices
were similar. Our experiments were done using the software MATLAB version R2023b.

\bigskip

We note that the matrix $M(A,w)$ is helpful in recognizing an $s$-block
perturbed consistent matrix. In fact, $M(A,w)$ for a certain column $w$ of $A$
should leave $0$'s outside the principal submatrix corresponding to the
smallest $s$-by-$s$ block that is perturbed.

\bigskip

\begin{example}
\label{ex1}Consider the $3$-block perturbed consistent matrix
\[
A=\left[
\begin{tabular}
[c]{c|c}%
$%
\begin{array}
[c]{ccc}%
1 & 4 & 3\\
\frac{1}{4} & 1 & 2\\
\frac{1}{3} & \frac{1}{2} & 1
\end{array}
$ & $J_{3,5}$\\\hline
$J_{5,3}$ & $J_{5}$%
\end{tabular}
\ \ \right]  \in\mathcal{PC}_{8}%
\]
and the positive diagonal matrices
\begin{align*}
D_{1}  &  =\operatorname*{diag}(1.5,\text{ }4,\text{ }0.5)\oplus I_{5},\\
D_{2}  &  =\operatorname*{diag}(0.3,\text{ }0.4,\text{ }0.7)\oplus I_{5}.
\end{align*}
Note that, by Theorem \ref{tmain3block}, $\mathcal{C}(A)\subseteq
\mathcal{E}(A).$ For each of the $3$-block perturbed consistent matrices $A,$
$D_{1}^{-1}AD_{1}$ and $D_{2}^{-1}AD_{2},$ we consider $100$ trials and in
each trial $i$ generate a vector $\alpha_{(i)}$ with $8$ nonnegative entries
from a uniform distribution on $(0,1)$ and normalize it to have entries
summing to $1.$ In Figure \ref{fig1I} we provide the Frobenius norm of
$M(A,w_{(i)}),$ $i=1,\ldots,100,$ when $w_{(i)}$ is the the convex combination
of the columns of $A$ with coefficients given by $\alpha_{(i)},$ denoted by
$wa_{(i)},$ and when $w_{(i)}$ is the weighted geometric mean of the columns
of $A$ with weights given by $\alpha_{(i)},$ denoted by $wg_{(i)}.$ In the $x$
axis we have the different indices $i$ corresponding to a vector $\alpha
_{(i)}.$ In the $y$ axis we have the values of $\left\Vert M(A,w_{(i)}%
)\right\Vert _{2}$. A line jointing the values of these norms in each trial is
plotted. Horizontal lines corresponding to the considered norms for the
geometric mean of all columns, $w_{gm},$ for the Perron vector, $w_{P},$ for
the singular vector, $w_{s},$ and for the arithmetic mean of the columns (or,
analogously, the convex combination of the columns with constant
coefficients), $w_{sum},$ also appear.

In Figures \ref{figD1} and \ref{figD2} we present analogous graphics for the
matrices $D_{1}^{-1}AD_{1}$ and $D_{2}^{-1}AD_{2},$ respectively.

It can be observed that when a diagonal similarity via $D_{1}$ is considered,
the convex combinations of the columns perform better than the weighted
geometric means, though the latter perform better if we consider the
similarity via $D_{2}$. The relative performance of the the geometric and
arithmetic means of the columns, the Perron vector and the singular vector
also varies. The singular vector seems to have an extreme behavior (is almost
the best when the convex combinations of the columns are better than the
weighted geometric means, and almost the worst otherwise).

According to Theorem \ref{tmain3block}, if $A\in\mathcal{PC}_{n}$ is a
$3$-block perturbed consistent matrix as in (\ref{MAtriple}), with $a_{12}=4$
and $a_{23}=2,$ and $a_{13}>2\times4=8$, then the convex hull of the columns
is not contained in the set of efficient vectors for $A$. In Table \ref{tab3}
we give the number of inefficient vectors among $10000$ random convex
combinations of the columns, generated as above, for some values of $n$ and
$a_{13}>8.$ It can be observed that the number of inefficient vectors
increases when $a_{13}$ increases and is not too large, and then stabilizes.
In all cases, the singular vector and the arithmetic mean of all columns are
efficient. In Table \ref{tab31} we indicate the situations in which the Perron
vector is inefficient.
\end{example}

\bigskip%
\begin{table}[] \centering
$%
\begin{tabular}
[c]{|c|c|c|c|c|c|c|c|c|}\hline
$a_{13}$ & $8.2$ & $9$ & $12$ & $20$ & $50$ & $100$ & $1000$ & $10000$\\\hline
\multicolumn{1}{|l|}{$n=4$} & $60$ & $250$ & $1098$ & $2205$ & $2913$ & $3058$
& $3150$ & $3099$\\\hline
\multicolumn{1}{|l|}{$n=8$} & $0$ & $0$ & $2$ & $249$ & $2170$ & $2978$ &
$3605$ & $3684$\\\hline
\multicolumn{1}{|l|}{$n=20$} & $0$ & $0$ & $0$ & $0$ & $167$ & $1476$ & $3553$
& $3699$\\\hline
\multicolumn{1}{|l|}{$n=100$} & $0$ & $0$ & $0$ & $0$ & $0$ & $0$ & $2369$ &
$3553$\\\hline
\end{tabular}
\ \ \ \ \ \ \ \ \ $%
\caption{Number of inefficient vectors among $10000$ random convex combinations of the columns of $A$ as in (\ref{MAtriple}), with $a_{12}=4$ and $a_{23}=2$, for several values of $a_{13}$ for which the convex hull of the columns of  A is not contained in $\mathcal{E}(A)$. }\label{tab3}%
\end{table}%

\bigskip%
\begin{table}[] \centering
$%
\begin{tabular}
[c]{|c|c|c|c|c|c|c|c|c|}\hline
$a_{13}$ & $8.2$ & $9$ & $12$ & $20$ & $50$ & $100$ & $1000$ & $10000$\\\hline
\multicolumn{1}{|l|}{$n=4$} &  &  &  & x & x &  &  & \\\hline
\multicolumn{1}{|l|}{$n=8$} &  &  &  &  & x & x & x & \\\hline
\multicolumn{1}{|l|}{$n=20$} &  &  &  &  &  & x & x & x\\\hline
\multicolumn{1}{|l|}{$n=100$} &  &  &  &  &  &  & x & x\\\hline
\end{tabular}
\ \ \ \ \ \ \ \ \ $%
\caption{Inefficiency of the Perron vector  of $A$ as in (\ref{MAtriple}), with $a_{12}=4$ and $a_{23}=2$, for several values of $a_{13}$ for which the convex hull of the columns of  A is not contained in $\mathcal{E}(A)$. }\label{tab31}%
\end{table}%
%

\begin{figure}[htb]
 \includegraphics[width=\linewidth]{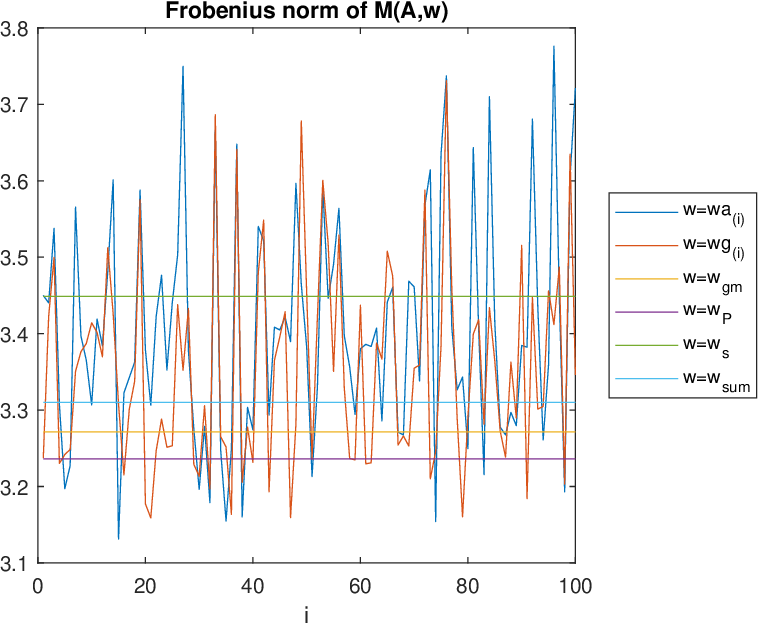}
 \caption{\label{fig1I} Comparison of the convex combinations and the weighted geometric
means of the columns of $A$ (Example \ref{ex1}).}

\end{figure}
%

\begin{figure}[htb]
 \includegraphics[width=\linewidth]{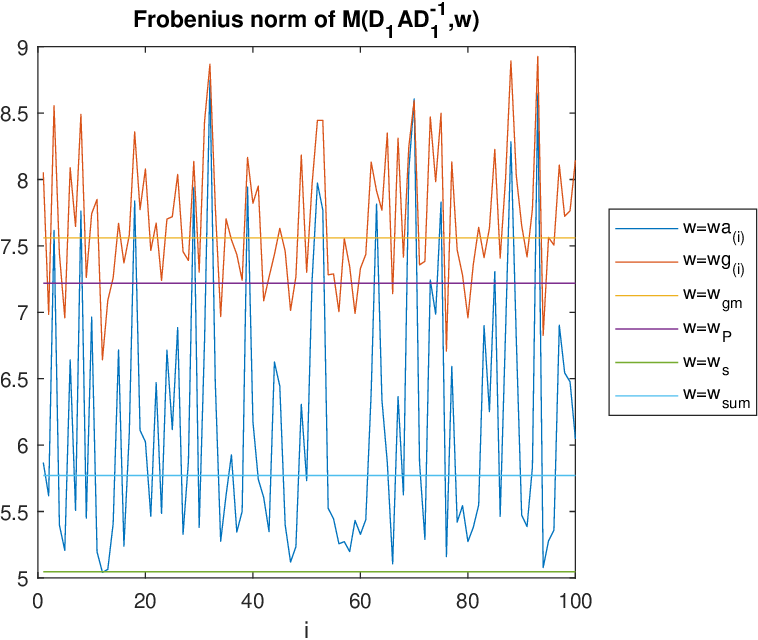}
 \caption{\label{figD1} Comparison of the convex combinations and the weighted geometric
means of the columns of $D_{1}^{-1}AD_{1}$ (Example \ref{ex1}).}

 \end{figure}

%


\begin{figure}[htb]
 \includegraphics[width=\linewidth]{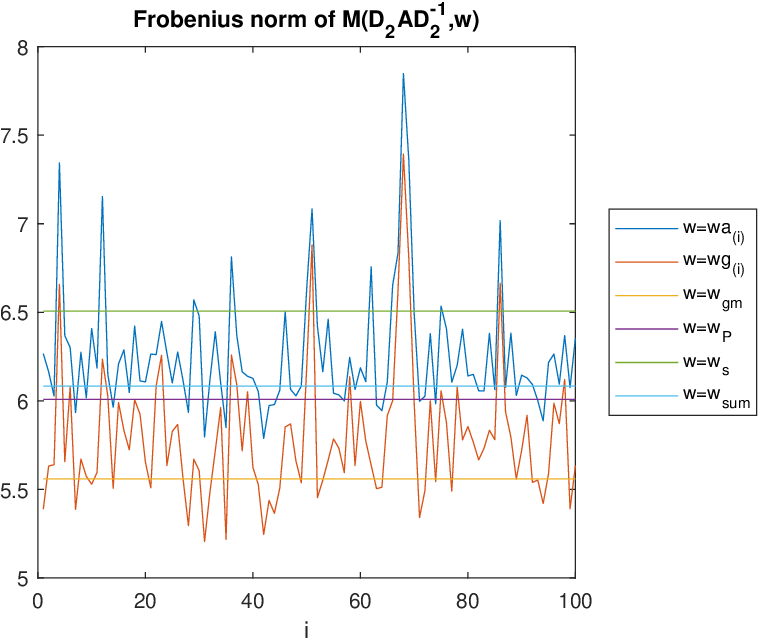}
 \caption{\label{figD2} Comparison of the convex combinations and the weighted geometric
means of the columns of $D_{2}^{-1}AD_{2}$ (Example \ref{ex1}).}

 \end{figure}

\section{Conclusions\label{s8}}

For reciprocal matrices obtained from consistent matrices by modifying $3$
pairs of reciprocal entries contained in a $4$-by-$4$ principal submatrix,
sufficient conditions were known for the efficiency of the Perron vector. For
such matrices, we show that these conditions are necessary and sufficient for
the convex hull of the columns to be contained in the set of efficient vectors
for the matrix. Since the (normalized) Perron vector of a matrix $A$ is in the
convex hull of its columns, our new results are much more general. When the
stated conditions hold, they provide new efficient vectors, in particular the
Perron vector of $AA^{T}$ (the singular vector of $A$). When there are convex
combinations of the columns of $A$ that are inefficient for $A$ then the set
of efficient vectors for $A$ is not convex, as any column of $A$ is efficient.

Numerical experiments for matrices in the classes studied compared vectors in
the convex hull of the columns with weighted geometric means of the columns
(known to be efficient). It can be seen that their relative performance may
change if a diagonal similarity is applied to the matrix. This emphasizes the
importance in practice of having a large class of efficient vectors from which
a weight vector can be chosen.

Whether the convex hull of the columns of a reciprocal matrix $A$ is contained
in the set of efficient vectors for $A$ is an important question for which
there is no general answer. This containment ensures the efficiency of the
Perron vector of $A,$ a classical proposal for the weight vector obtained from
a reciprocal matrix, and provides new simple efficient vectors obtained from
the columns of the matrix (in particular the recently proposed singular vector
of $A$)$.$

\section*{Disclosure of interest}

\bigskip

We declare there are no competing interests.

\bigskip

\end{document}